\documentclass[12pt]{amsart}
\usepackage{amsmath,amsthm,amsfonts,amssymb,mathrsfs}
\usepackage{inputenc,mathrsfs}

\numberwithin{equation}{section}
\usepackage[dvips]{graphicx}
\usepackage[colorlinks]{hyperref}

\setlength{\parindent}{0pt}
\setlength{\parskip}{6pt}

\newcommand{\Balpha}{\mbox{$\hspace{0.1em}\rule[0.01em]{0.05em}{0.39em}\hspace{-0.21em}\alpha$}}

\newtheorem{theorem}{Theorem}[section]

\newtheorem{conjecture}{Conjecture}[section]
\newtheorem{question}{Question}[section]
\newtheorem{proposition}{Proposition}[section]
\newtheorem{lemma}{Lemma}[section]
\newtheorem{corollary}{Corollary}[section]

\newcommand{\abs}[1]{\lvert#1\rvert}

\newcommand{\eps}{\varepsilon}

\theoremstyle{remark}

\DeclareMathOperator{\diam}{\mathrm{diam}}

\baselineskip=7.0mm
\setlength{\baselineskip}{1.09\baselineskip}

\dedicatory{}

\keywords{}

\begin{document}

\title[Intrinsic diameter control under mean curvature flow]{Diameter and curvature control under mean curvature flow}
\author{Panagiotis Gianniotis \and Robert Haslhofer}
\maketitle

\begin{abstract}
We prove that for the mean curvature flow of two-convex hypersurfaces the intrinsic diameter stays uniformly controlled as one approaches the first singular time. We also derive sharp $L^{n-1}$-estimates for the regularity scale of the level set flow with two-convex initial data. Our proof relies on a detailed analysis of cylindrical regions ($\eps$-tubes) under mean curvature flow. The results are new even in the most classical case of mean convex surfaces evolving by mean curvature flow in $\mathbb{R}^3$.
\end{abstract}

\section{Introduction}
A family of hypersurfaces $\{M_t^n\subset\mathbb{R}^{n+1}\}_{t\in [0,T)}$ evolves by mean curvature flow if the normal velocity at each point is given by the mean curvature vector. By classical theory (see e.g. \cite{Huisken84,HuiskenPolden}), given any closed embedded initial hypersurface $M_0^n\subset \mathbb{R}^{n+1}$, there exists a unique smooth solution defined on a maximal time interval $[0,T)$. The maximal existence time $T<\infty$ is characterized by
\begin{equation}
\lim_{t\nearrow T}\max_{M_t}\, \abs{A}=\infty ,
\end{equation}
where $|A|$ denotes the norm of the second fundamental form.

One naturally wonders to what extend one can control the geometry of the hypersurfaces -- curvature integrals, intrinsic diameter, etc -- as one approaches the first singular time (more generally, one can pose these questions also beyond the first singular time in the setting of mean curvature flow with surgery and level set flow, respectively):

\begin{question}[Curvature control]
Can one control the curvature integrals $\int_{M_t} \abs{A}^p \, d\mu$ as the flow approaches the first singular time?
\end{question}

\begin{question}[Diameter control]
Can one control the intrinsic diameter as the flow approaches the first singular time?
\end{question}

The two questions are in fact tightly related. For example, by a result of Topping \cite[Thm. 1.1]{Topping_diameter} we have the estimate
\begin{equation}
\diam (M_t,d_t) \leq C_n \int_{M_t} \abs{H}^{n-1}\, d\mu\, .
\end{equation}

To obtain diameter control, one thus might try to derive uniform $L^{n-1}$-bounds for the mean curvature.

For $n>2$, the situation where there is some hope to get uniform $L^{n-1}$-bounds concerns the evolution of two-convex hypersurfaces,
i.e. when the sum of the smallest two principal curvatures $\lambda_1+\lambda_2$ is positive.\footnote{One of course cannot hope to get $L^{n-1}$-bounds assuming only mean convexity. Indeed, consider the case that $M_0\approx S^{n-2}_r\times S^{2}_R$ is a very thin rotationally symmetric torus, i.e. $r$ is very small. Under the flow these small $(n-2)$-spheres will degenerate to points, and it is easy to see that $\lim_{t\nearrow T}  \int_{M_t} H^{n-1}\, d\mu = \infty$.} This curvature condition arises naturally in the work on mean curvature flow with surgery by Huisken-Sinestrari \cite{HS_surg} (see also \cite{HK_surgery,BH_surgery3d,BH_mod2con}), and its main feature is that it excludes generalized cylinders $\mathbb{R}^j\times S^{n-j}$ with more than one $\mathbb{R}$-factor (i.e. $j\geq 2$) as potential blowup limits.

For $n=2$, the notion of two-convexity boils down to the simpler notion of mean convexity, i.e. positive mean curvature, or in other words the assumption that the flow is moving inwards.

It has been proved by Head \cite{Head} and Cheeger-Haslhofer-Naber \cite{CHN} that for the mean curvature flow of two-convex hypersurfaces one has
\begin{equation}\label{int_bounds}
\int_{M_t} \abs{A}^{n-1-\eps} \, d\mu \leq C(M_0,\eps)<\infty,
\end{equation}
for any $\eps>0$. Motivated by this result, it is natural to conjecture:\footnote{R.H. thanks John Head for introducing him to these conjectures during a visit to the Courant Institute in 2011. While we unfortunately don't know the precise history, the conjectures have certainly been discussed among experts well before 2011, c.f. Perelman's bounded diameter conjecture for 3d Ricci flow \cite[Sec. 13.2]{perelman_entropy}.}

\begin{conjecture}[{$L^{n-1}$-curvature conjecture}]\label{conjecture_curv}
If $\{M_t\subset\mathbb{R}^{n+1}\}_{t\in [0,T)}$ is a mean curvature flow of two-convex closed embedded hypersurfaces, then
\begin{equation}
\int_{M_t} \abs{A}^{n-1} \, d\mu \leq C,
\end{equation}
for some constant $C<\infty$ depending only on the geometric parameters of the initial hypersurface.
\end{conjecture}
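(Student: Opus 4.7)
The natural strategy is to upgrade the Head--Cheeger--Haslhofer--Naber bound \eqref{int_bounds} from exponent $n-1-\eps$ to the borderline exponent $n-1$ via a structural decomposition of $M_t$ into cylindrical pieces (``$\eps$-tubes'') and pieces of bounded normalized geometry (``caps''). The underlying heuristic is that on a round cylinder $\R\times S^{n-1}_r$ one has $\abs{A}\equiv 1/r$, so $\int\abs{A}^{n-1}\,d\mu$ equals, up to a universal constant, the length of the axis. The conjecture therefore amounts to a uniform bound on the total axial length of the cylindrical part of $M_t$.

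First, I would invoke the canonical neighborhood theorem for two-convex mean curvature flow \cite{HS_surg,HK_surgery}: at each scale, $M_t$ decomposes into $\eps$-tubes $T_i$ (maximal regions that, after rescaling, are $\eps$-close to a round shrinking cylinder) together with finitely many cap regions $C_j$ of uniformly controlled normalized geometry. Each $C_j$ satisfies a pointwise bound $\abs{A}\le K/r_j$ with $\diam(C_j)\le Kr_j$, hence $\int_{C_j}\abs{A}^{n-1}\,d\mu\le K'$; since each cap carries a definite amount of Gaussian entropy, the number of caps is controlled in terms of the initial data, and the cap contribution is thus bounded.

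Second, on each tube $T_i$, parametrize by arc length $s$ along the approximate axis with local cylinder radius $r(s)$. The $\eps$-closeness gives $\abs{A}=(1+O(\eps))/r(s)$ together with cross-sectional $(n-1)$-area $(1+O(\eps))\omega_{n-1}\,r(s)^{n-1}$, so
\begin{equation*}
\int_{T_i}\abs{A}^{n-1}\,d\mu=(1+O(\eps))\,\omega_{n-1}\,\ell(T_i),
\end{equation*}
where $\ell(T_i)$ denotes the axial length of $T_i$. Conjecture~\ref{conjecture_curv} thus reduces to the purely geometric statement $\sum_i\ell(T_i)\le C(M_0)$, uniformly in $t\in[0,T)$.

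The hard step is precisely this total axial length bound, and it is where the paper's fine $\eps$-tube analysis must enter. The existing $L^{n-1-\eps}$ estimate only yields $\sum_i r_i^{-\eps}\,\ell(T_i)\le C_\eps$, which overweights thin tubes and so does not imply the desired bound. My plan is a backwards-in-time tracking argument: an $\eps$-tube present at time $t$ either persists from an earlier time with comparable or larger length, or else was produced by a controlled neck-pinching/elongation event whose cumulative ``budget'' across $[0,T)$ is dictated by the extrinsic geometry of $M_0$ and by the finite number of Gaussian-density atoms permitted under two-convexity. The delicate points are (i) handling concatenation and branching of tubes as they propagate across scales without double-counting, and (ii) ruling out a logarithmic divergence of $\sum_i\ell(T_i)$ as $t\nearrow T$. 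Overcoming (ii) is, in my view, the essential new input, since it is what distinguishes the borderline exponent $n-1$ from the sub-critical results already in the literature.
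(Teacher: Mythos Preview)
Your reduction is exactly right and matches the paper: decompose into caps and $\eps$-tubes, observe that the cap contribution is controlled, and that on a tube the integral $\int_{T}\abs{A}^{n-1}\,d\mu$ is comparable to the axial length $\ell(T)$. The paper carries out precisely this computation in Section~\ref{sec_sharp_integral} (see \eqref{T_high}). So the whole conjecture does reduce to a uniform bound on $\sum_i\ell(T_i)$, as you say.

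The gap is in your mechanism for bounding the total tube length. Your proposed ``budget'' argument based on neck-pinching events and Gaussian density atoms is not how the paper proceeds, and I do not see how to make it work: there is no a priori bound on the number of topological changes before the first singular time (there are none), and the monotonicity of Huisken's functional controls only the number of disjoint almost-shrinkers at a fixed scale, not the \emph{length} of a single very thin tube. The obstruction you correctly flag---a possible logarithmic divergence of $\ell(T_i)$ coming from a fractal-tube scenario---is not ruled out by any counting of density atoms.

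The paper's actual mechanism is different and has two ingredients you are missing. First, a backwards-stability result for necks in $(\alpha,\beta)$-solutions (Proposition~\ref{prop_backward_stab}): if a canonical model has a neck at time $-1$, then sufficiently far back in time it has an even better neck. Combined with the canonical neighborhood theorem and a continuity argument, this shows that around every point $p$ on the central curve of a high-curvature tube at time $\bar t$, the flow has $\eps_1$-necks centered at $p$ of the expected radius $\sqrt{2(n-1)(t_p-t)}$ for all $t\in[\bar t-\tau,\bar t]$, for a \emph{uniform} $\tau>0$. Second---and this is the essential new input you are looking for---the Colding--Minicozzi {\L}ojasiewicz inequality (Theorem~\ref{thm_lojsim}, used via Proposition~\ref{prop_small_tilt}) then forces the axis direction at $p$ to be nearly constant across all those scales. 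At the fixed macroscopic time $\bar t-\tau$ the necks around nearby points on $\gamma$ overlap substantially, so their axes agree up to $O(\eps_0)$, and $\gamma$ is $O(\eps_0)$-close to a straight segment in every ball of radius $\sim\sqrt{\tau}$. Since $M_0$ sits in a ball of controlled radius, this bounds $\ell(\gamma)$. In short: the tube-length bound is obtained by first proving the diameter bound (Theorem~\ref{thm_diam}) via {\L}ojasiewicz plus backwards stability, and then reading off the $L^{n-1}$ estimate from it---the opposite of the Topping-inequality route.
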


\begin{conjecture}[{bounded diameter conjecture}]\label{conjecture_diam}
If $\{M_t\subset\mathbb{R}^{n+1}\}_{t\in [0,T)}$ is a mean curvature flow of two-convex closed embedded hypersurfaces, then 
\begin{equation}
\textrm{diam}({M_t},d_t) \leq C,
\end{equation}
for some constant $C<\infty$ depending only on the geometric parameters of the initial hypersurface.
\end{conjecture}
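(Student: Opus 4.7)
My strategy is to attack Conjecture \ref{conjecture_diam} directly via a structural decomposition of $M_t$, rather than passing through the endpoint $L^{n-1}$ bound of Conjecture \ref{conjecture_curv} (though Topping's inequality (1.2) would reduce the former to the latter). The core intuition is that two-convexity forces the high-curvature part of $M_t$ to consist of $\eps$-tubes, i.e. cylindrical regions with a single $\mathbb{R}$-factor, glued to a controlled number of cap-like pieces, and each ingredient can be analyzed separately.

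\emph{Step 1 (Structural decomposition).} Fix $\eps>0$ small. Using the Huisken--Sinestrari convexity estimates together with the quantitative neck-detection lemmas developed for mean curvature flow with surgery \cite{HS_surg,HK_surgery}, there is a threshold $K=K(M_0,\eps)$ such that for every $t\in[0,T)$
\[
\{x\in M_t : \abs{A}(x)\geq K\} \;=\; \bigsqcup_{i} T^i_t \;\sqcup\; \bigsqcup_{j} C^j_t ,
\]
where each $T^i_t$ is a maximal $\eps$-tube diffeomorphic to $S^{n-1}\times[0,L^i_t]$ and $\eps$-close after parabolic rescaling to a round cylinder $S^{n-1}\times\mathbb{R}$ of radius $r^i_t$, and each $C^j_t$ is a cap-like region of intrinsic diameter $\leq c\, r^j_t$. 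Two-convexity rules out higher-dimensional cylindrical factors, and the total number of tubes and caps is controlled by the topology and initial geometry of $M_0$.

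\emph{Steps 2--3 (Diameter bookkeeping).} On $\{\abs{A}<K\}$ the curvatures are uniformly bounded, so combined with the area bound $\vol(M_t)\leq \vol(M_0)$ coming from mean convexity, Bishop--Gromov volume comparison gives a uniform intrinsic-diameter bound on each connected component. The caps contribute bounded intrinsic diameter by definition. For each $\eps$-tube $T^i_t$ the intrinsic diameter is dominated by its axial length $L^i_t$, since the circumferential contribution is $O(r^i_t)\to 0$. I would parametrize $T^i_t$ by an axial curve $\gamma^i_t\subset\mathbb{R}^{n+1}$, use quantitative cylindrical stability to compare $L^i_t$ with the ambient arclength of $\gamma^i_t$, and bound the latter by the extrinsic diameter of the convex hull of $M_0$, which is monotone nonincreasing along the flow.

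\emph{Main obstacle.} The technical core is the last step: controlling the intrinsic axial length of each $\eps$-tube uniformly as $t\nearrow T$. A priori, tubes may elongate or concatenate as the singular time approaches, and preventing this demands a time-uniform parabolic neck-length estimate, which is presumably the $\eps$-tube analysis advertised in the abstract. I expect this to proceed by iterating the Huisken--Sinestrari cylindrical estimates along the tube axis while using two-convexity to prevent the profile from degenerating into non-cylindrical shapes; the delicate point is that near the first singular time the tubes' radii $r^i_t$ collapse to zero, so one must work entirely in rescaled geometry and carefully track how rescalings at different points of the same tube are compatible.
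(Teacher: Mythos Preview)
Your decomposition in Steps~1--2 is broadly correct and parallels the paper's reduction (Propositions~\ref{prop_red1} and~\ref{prop_red2}): once the low-curvature part and the caps are handled, everything comes down to bounding the length of each $\eps$-tube. But Step~3 has a genuine gap, and it is precisely the obstacle the paper is built around.

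You propose to compare the intrinsic tube length $L^i_t$ with the ambient arclength of the central curve $\gamma^i_t$, and then bound that arclength by the extrinsic diameter of the convex hull of $M_0$. The first comparison is fine, but the second fails: arclength of a curve in $\mathbb{R}^{n+1}$ is \emph{not} controlled by the diameter of the region it sits in unless the curve is nearly straight. The paper explicitly flags this as the ``fractal tube'' scenario (Figure~\ref{fig:fractal_neck}): the axes of neighboring $\eps$-necks along the tube could rotate slowly but persistently over many scales, arranging $\gamma^i_t$ into a Koch-snowflake-like curve of arbitrarily large arclength inside a bounded ball. Nothing in the Huisken--Sinestrari cylindrical estimates prevents this; those estimates tell you each point looks like a neck, but say nothing about how the neck axes align as you move along the tube or across scales. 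Your ``main obstacle'' paragraph senses the difficulty but the proposed fix---iterating neck estimates along the axis at fixed time---attacks the wrong direction.

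What the paper actually does is go \emph{backward in time} rather than along the tube. For each point $p$ on the central curve, it shows (via a backwards-stability result for necks in $(\alpha,\beta)$-solutions, Proposition~\ref{prop_backward_stab}, combined with the canonical neighborhood theorem) that the strong $\eps$-neck at $p$ persists as a neck of radius $\sqrt{2(n-1)(t_p-t)}$ for a uniform backward time interval $[\bar t-\tau,\bar t]$, reaching a \emph{macroscopic} scale. Then the Colding--Minicozzi {\L}ojasiewicz inequality (Theorem~\ref{thm_lojsim}, applied via Proposition~\ref{prop_small_tilt}) controls the total tilt of the neck axis across all those scales. At time $\bar t-\tau$ all the necks have definite size, so overlapping necks force their axes to agree, and this makes $\gamma$ $O(\eps_0)$-close to a straight segment on each ball of controlled radius---only then does the extrinsic diameter bound the arclength. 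The {\L}ojasiewicz inequality is the essential missing ingredient in your plan; without it there is no known mechanism to exclude the fractal tube.
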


Note that by the result of Topping \cite[Thm. 1.1]{Topping_diameter} an affirmative answer to Conjecture \ref{conjecture_curv} would imply an affirmative answer to Conjecture \ref{conjecture_diam}.

Conjecture \ref{conjecture_curv} states that one can get rid of the $\eps$ in the curvature estimate \eqref{int_bounds}. The question of whether or not one can actually get rid of the $\eps$ in estimates like \eqref{int_bounds} is a very delicate question, that depends on the fine structure of singularities and regions of high curvature.

For comparison, it is useful to look at related elliptic questions. In recent impressive work \cite{NV}, Naber-Valtorta improved the known $L^{3-\eps}$ estimates for the gradient of minimizing harmonic maps to sharp $L^{3}_{\textrm{weak}}$-estimates. The simple example $f(x)=\tfrac{x}{\abs{x}}$ in dimension three, shows that for minimizing harmonic maps the $L^{3}_{\textrm{weak}}$ estimate actually \emph{cannot} be replaced by an $L^{3}$ estimate; cf. also the related work \cite{NV2,NV3,EE,WangYM}.

Having discussed the subtleties of sharp integral estimates, let us now emphasize that our approach for the solution of Conjecture \ref{conjecture_curv} and Conjecture \ref{conjecture_diam} actually proceeds in the opposite order. Namely, we first prove that we can control the intrinsic diameter:

\begin{theorem}[Intrinsic diameter control]\label{thm_diam}
If $\{M_t\subset\mathbb{R}^{n+1}\}_{t\in [0,T)}$ is a mean curvature flow of two-convex closed embedded hypersurfaces, then
\begin{equation}
\diam (M_t,d_t)\leq C,
\end{equation}
for a constant $C=C(\alpha,\beta,\gamma,\mathcal{A})<\infty$, which only depends on certain geometric parameters of the initial hypersurface $M_0$ (see Section \ref{sec_prelim_mcf}).
\end{theorem}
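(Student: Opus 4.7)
The plan is to partition $M_t$ into two types of regions and bound the intrinsic diameter of each type separately. The first type consists of ``cap'' regions, where the flow is, after rescaling, $\eps$-close to a compact strictly convex ancient solution; on such a region the intrinsic diameter is bounded by a constant multiple of the local curvature scale, and the number of such components can be controlled via the topology of $M_0$ together with the integral estimate \eqref{int_bounds}. The second type consists of \emph{$\eps$-tubes}, namely maximal cylindrical regions where the flow is $\eps$-close to a round shrinking cylinder $\R \times S^{n-1}$. Two-convexity is crucial here: it rules out higher-dimensional cylindrical factors, and therefore forces every $\eps$-tube to be topologically an interval times $S^{n-1}$, foliated by almost-round cross-sectional spheres. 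Granted both types, the bound on the total intrinsic diameter follows by summation.

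\textbf{Tube length.} The main technical step is to bound the intrinsic axial length $L(t)$ of every $\eps$-tube uniformly. I would parametrize a tube by an axis parameter $s$ and analyze the joint evolution of the radius function $r(s,t)$ and the axial arclength. On the interior of an $\eps$-tube the leading-order behavior is that of a shrinking cylinder, which does not stretch in the axial direction, so axial length can only grow through ``cap-to-tube'' transitions at the endpoints, where previously convex regions convert to cylindrical. Combining two-convexity with the canonical neighborhood and pseudolocality theorems available in the two-convex setting, each such transition should contribute only a bounded amount of new axial length per unit local curvature scale. Running the flow backwards from the singular time $T$, an $\eps$-tube with current minimal radius $r$ can only have existed as a tube during a time interval of length $\lesssim r^2$, and combining this with \eqref{int_bounds} should yield a uniform bound on $L(t)$.

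\textbf{Main obstacle.} The main difficulty is making rigorous the intuition that the axial length of an $\eps$-tube changes slowly. A priori, small non-cylindrical perturbations along a long tube could integrate to produce a substantial change of axial length, and even the definition of the endpoints of a tube must be made in a way that itself evolves in a controlled manner. What will be essential is a quantitative almost-monotonicity statement for the axial length, coupled with a careful tracking of the interface between caps and tubes. Once this is in place, combining the tube length bound, the cap diameter bound, and the bounded count of components should produce the desired constant $C=C(\alpha,\beta,\gamma,\mathcal{A})$.
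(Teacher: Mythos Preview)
Your reduction to bounding the length of $\eps$-tubes is correct and matches the paper (see Propositions \ref{prop_red1} and \ref{prop_red2}). The gap is in the tube-length step.

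Your proposed mechanism---combining a backward-time lifespan estimate with the integral bound \eqref{int_bounds}---cannot close. On an $\eps$-tube of axial length $L$ and radius comparable to $r$, one has $H\sim r^{-1}$ and $d\mu \sim r^{n-1}\,ds$, so
\[
\int_{\textrm{tube}} H^{n-1-\eps}\,d\mu \;\sim\; L\,r^{\eps}.
\]
Thus the available bound \eqref{int_bounds} only yields $L\lesssim C(\eps)\,r^{-\eps}$, which blows up as $r\to 0$. This is precisely the $\eps$-loss that makes the problem hard; the paper in fact proves the sharp $L^{n-1}$ estimate \emph{after} the diameter bound, not before. Likewise, your ``axial length changes slowly'' heuristic is exactly what fails in the ``fractal tube'' scenario of Figure \ref{fig:fractal_neck}: small tilts accumulated over many scales could produce a Koch-like central curve of unbounded length, and nothing in your outline rules this out. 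You flag this yourself as the main obstacle, but the tools you list (canonical neighborhoods, pseudolocality, cap-to-tube transitions) do not address it.

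The paper's argument resolves this with two ingredients you are missing. First, a backwards-stability result for necks in $(\alpha,\beta)$-solutions (Proposition \ref{prop_backward_stab}) combined with a continuity argument shows that for every point $p$ on the central curve of the tube, the flow has a strong $\eps_1$-neck centered at $p$ of radius $\sqrt{2(n-1)(t_p-t)}$ for \emph{all} $t$ in a backward interval $[\bar t-\tau,\bar t]$ of \emph{uniform} macroscopic length $\tau=\tau(\Balpha)$. Second, and crucially, the Colding-Minicozzi {\L}ojasiewicz inequality (Theorem \ref{thm_lojsim}, via Proposition \ref{prop_small_tilt}) then shows that the axis of each such neck tilts by only $O(\eps_0)$ over the whole range of scales from $r_p$ up to $\sqrt{\tau}$. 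At time $\bar t-\tau$ all necks have macroscopic radius $\sim\sqrt{\tau}$, overlapping necks must be aligned, and hence the central curve $\gamma$ is $O(\eps_0)$-close to a straight segment on every ball of radius $\sim\eps_0^{-1}\sqrt{\tau}$. Since $M_0$ sits in a ball of controlled radius, this bounds $L$. The {\L}ojasiewicz inequality is the indispensable tool here; without it there is no control on the total tilt, and your approach has no substitute for it.
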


The diameter bound from Theorem \ref{thm_diam} is new even in the classical case of mean convex surfaces evolving in $\mathbb{R}^3$ (note that for $n=2$ mean convex surfaces are automatically $\beta$-uniformly two-convex with $\beta=1$).

We then use Theorem \ref{thm_diam} to derive sharp integral estimates for the second fundamental form. More precisely, we actually obtain sharp integral estimates for the regularity scale of the level set flow with two-convex initial data. Recall from \cite{CHN}, if $\mathcal{M}=\{M_t\}_{t\in [0,T)}$ is a weak solution of the mean curvature flow (here we only consider the mean convex case, where Brakke solutions and level set solutions are known to be equivalent) then the regularity scale $r_{\mathcal{M}}(x,t)$ at a point $(x,t)\in\mathcal{M}$ is defined as the supremum of $0\leq r\leq 1$ such that $M_{t'}\cap B_r(x)$ is a smooth graph for all $t-r^2<t'<t+r^2$ and such that
\begin{equation}
\sup_{|x'-x|<r, |t'-t|<r^2} r |A|(x',t')\leq 1.
\end{equation}
Obviously $|A|(x,t)\leq r_{\mathcal{M}}(x,t)^{-1}$, but of course a bound for $r_{\mathcal{M}}(x,t)^{-1}$ captures geometric control on a whole parabolic neighborhood of definite size as opposed to just information at the single point $(x,t)$. 

\begin{theorem}[Sharp regularity estimate]\label{thm_regularity_est}
If $\mathcal{M}=\{M_t\subset\mathbb{R}^{n+1}\}_{t\in [0,T)}$ is a level set flow with two-convex initial data, then we have the sharp estimate
\begin{equation}\label{integral_estimate}
\int_{M_t} r_{\mathcal{M}}(x,t)^{-(n-1)} d\mu_t(x)\leq C,
\end{equation}
for a constant $C=C(\alpha,\beta,\gamma,\mathcal{A})<\infty$, which only depends on certain geometric parameters of the initial hypersurface $M_0$ (see Section \ref{sec_prelim_mcf}).
\end{theorem}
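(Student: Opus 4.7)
The plan is to combine the intrinsic diameter bound from Theorem~\ref{thm_diam} with a detailed decomposition of $M_t$ into $\eps$-tubes and $\eps$-caps, as provided by the canonical neighborhood theory for two-convex mean curvature flow. The guiding dimensional observation is that on a round cylinder $\mathbb{R} \times S^{n-1}_r$ of axial length $L$ one has
\begin{equation}
\int r_{\mathcal{M}}^{-(n-1)} \, d\mu \; \sim \; r^{-(n-1)}\cdot L \cdot r^{n-1} \; = \; L ,
\end{equation}
so the exponent $n-1$ is exactly the one that converts the integral into an intrinsic length. This explains why an intrinsic diameter bound should be precisely what is needed to close the estimate, and why the exponent is sharp.

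I would proceed as follows. Fix $\eps>0$ small and a threshold $r_0>0$, both depending only on initial data, such that by the canonical neighborhood theorem for two-convex flows (see \cite{HS_surg,HK_surgery,BH_surgery3d,BH_mod2con,CHN}) every $(x,t)\in\mathcal{M}$ with $r_{\mathcal{M}}(x,t)\leq r_0$ lies in an $\eps$-tube or an $\eps$-cap. Decompose $M_t$ into the regular region $\{r_{\mathcal{M}}>r_0\}$ together with a disjoint family of maximal $\eps$-tubes $\mathcal{T}_i$ and $\eps$-caps $\mathcal{C}_j$. On the regular region the integrand is pointwise bounded by $r_0^{-(n-1)}$ and the total area is bounded by the initial area (area monotonicity for mean convex level set flow), so this contribution is harmless. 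On each $\eps$-tube $\mathcal{T}_i$ with soul of intrinsic length $L_i$, the computation above together with $C^1$-closeness to a cylinder yields
\begin{equation}
\int_{\mathcal{T}_i} r_{\mathcal{M}}^{-(n-1)} \, d\mu_t \; \leq \; C(n,\eps)\, L_i ,
\end{equation}
and the contribution from each cap $\mathcal{C}_j$ is analogously bounded by $C(n,\eps)\, r_j$, where $r_j$ is its tip scale. The problem is thereby reduced to bounding $\sum_i L_i + \sum_j r_j$.

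This is where Theorem~\ref{thm_diam} enters. The soul of a tube is, up to small error, a length-minimizing curve in the intrinsic metric on $M_t$, so each $L_i$ and each $r_j$ is individually at most $\diam(M_t,d_t)\leq C$. To control the sum one needs to bound the number of maximal tubes and caps at time $t$, and to do so uniformly in $t$. Two-convexity is essential here: because $\mathbb{R}^2\times S^{n-2}$ is excluded as a blowup limit, $\eps$-tubes cannot branch or merge transversely, so the family $\{\mathcal{T}_i,\mathcal{C}_j\}$ organizes into a tree of bounded combinatorial complexity, with the bound depending only on the topology and area of $M_0$. Assembling the pieces yields $\sum_i L_i + \sum_j r_j \leq C\, \diam(M_t,d_t)$, which together with Theorem~\ref{thm_diam} gives \eqref{integral_estimate}.

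\textbf{Main obstacle.} The hardest step is the combinatorial control of the $\eps$-tube/cap decomposition: one has to show that maximal tubes behave as essentially disjoint one-dimensional objects, so that their total axial length is controlled by the intrinsic diameter --- rather than, a priori, by its square or worse once one accounts for nesting at many scales. This quantitative structural statement about cylindrical regions is precisely the "detailed analysis of $\eps$-tubes" advertised in the abstract, and is where the two-convexity assumption plays its decisive role.
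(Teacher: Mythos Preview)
Your dimensional heuristic and the tube-by-tube estimate $\int_{\mathcal{T}_i} r_{\mathcal{M}}^{-(n-1)}\,d\mu_t \leq C L_i$ are exactly right and match the paper. But there are two genuine gaps.

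\textbf{First gap: you cannot invoke Theorem~\ref{thm_diam} on the level set flow.} Theorem~\ref{thm_diam} is stated and proved only for the \emph{smooth} flow up to the first singular time. The level set flow continues through singularities, may split into several components, and the argument for Theorem~\ref{thm_diam} (which relies on following very strong $\eps$-necks backwards over a uniform time interval $\tau$) does not go through verbatim once the flow is weak. The paper handles this by passing through \emph{mean curvature flow with surgery}: one checks that the diameter bound extends to $(\Balpha,\delta,\mathbb{H})$-flows with a constant independent of the surgery parameters (the only new case in the canonical neighborhood argument is the standard cap), then proves the $L^{n-1}$ estimate for $H$ uniformly along a sequence of surgery flows with $H_{\textrm{neck}}\to\infty$, and finally passes to the level set flow as a varifold limit (Head, Lauer) using lower semicontinuity of $\int H^{n-1}$. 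The upgrade from $H$ to $r_{\mathcal{M}}^{-1}$ is the last step, via the local curvature estimate $r_{\mathcal{M}}^{-1}\leq C(\Balpha)H$. Working directly on the level set flow, as you propose, short-circuits all of this without supplying a replacement.

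\textbf{Second gap: the combinatorial control.} Your ``tree of bounded combinatorial complexity'' is not an argument; two-convexity rules out branching of individual tubes, but says nothing a priori about how many disjoint maximal tubes can sit inside a fixed component. The paper's bound is a simple area argument: each maximal high-curvature tube must contain a point with $H=\tfrac{3}{2}\bar{H}$ (since it transitions from $H>\bar{H}$ to meeting $\{H>2\bar{H}\}$), hence contains an entire $\eps$-neck at that fixed scale, which has area bounded below by a constant depending only on $\bar{H}$ and $n$. Disjointness of the tubes and the area bound $\mathcal{A}$ then control their number. This is the step you should replace your tree sketch with.
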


Since $|A|(x,t)\leq r_{\mathcal{M}}(x,t)^{-1}$, Theorem \ref{thm_regularity_est} immediately implies a sharp integral estimate for the second fundamental form:
\begin{corollary}[Sharp curvature control]\label{cor_regularity_est}
If $\mathcal{M}=\{M_t\subset\mathbb{R}^{n+1}\}_{t\in [0,T)}$ is a level set flow with two-convex initial data, then we have the sharp curvature estimate\footnote{Note that $M_t$ is $n$-rectifiable with multiplicity one for all $t$, and that the second fundamental form $A$ is defined classically $\mu_t$-almost everywhere.}
\begin{equation}
\int_{M_t} |A|^{n-1} d\mu_t \leq C,
\end{equation}
for a constant $C=C(\alpha,\beta,\gamma,\mathcal{A})<\infty$, which only depends on certain geometric parameters of the initial hypersurface $M_0$ (see Section \ref{sec_prelim_mcf}).
\end{corollary}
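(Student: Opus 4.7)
The proof is a direct deduction from Theorem \ref{thm_regularity_est}, as already foreshadowed in the statement. The plan is to derive the pointwise bound $\abs{A}(x,t) \leq r_{\mathcal{M}}(x,t)^{-1}$, raise both sides to the $(n-1)$-th power, integrate over $M_t$, and invoke Theorem \ref{thm_regularity_est}.

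For the pointwise bound I would unwind the definition of the regularity scale: $r_{\mathcal{M}}(x,t)$ is the supremum of $r \in [0,1]$ such that $M_{t'} \cap B_r(x)$ is a smooth graph for $\abs{t'-t}<r^2$ and $\sup_{\abs{x'-x}<r,\,\abs{t'-t}<r^2} r\abs{A}(x',t') \leq 1$. Specializing $(x',t')=(x,t)$ in this supremum forces $r\abs{A}(x,t)\leq 1$ for every admissible $r$, hence $\abs{A}(x,t) \leq r_{\mathcal{M}}(x,t)^{-1}$ at every regular point. Since $M_t$ is $n$-rectifiable with multiplicity one and $A$ is defined classically $\mu_t$-almost everywhere, raising to the power $n-1$ and integrating produces
\[
\int_{M_t} \abs{A}^{n-1}\, d\mu_t \;\leq\; \int_{M_t} r_{\mathcal{M}}(x,t)^{-(n-1)}\, d\mu_t(x) \;\leq\; C,
\]
where the last inequality is precisely the content of Theorem \ref{thm_regularity_est}, yielding the claimed constant $C=C(\alpha,\beta,\gamma,\mathcal{A})$.

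There is essentially no obstacle in this deduction; all the substantial work is carried by Theorem \ref{thm_regularity_est}. The only measure-theoretic nicety is to confirm that the singular set $\{x \in M_t : r_{\mathcal{M}}(x,t)=0\}$ is $\mu_t$-negligible, so that the pointwise inequality holds $\mu_t$-almost everywhere and no mass is missed when passing from $\abs{A}^{n-1}$ to $r_{\mathcal{M}}^{-(n-1)}$. For mean convex (and a fortiori two-convex) level set flows this is standard, following from the $n$-rectifiability of $M_t$ with multiplicity one together with the partial regularity theory for the mean convex level set flow invoked in the references cited earlier in the introduction.
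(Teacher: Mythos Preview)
Your proof is correct and matches the paper's approach exactly: the paper simply notes that $|A|(x,t)\leq r_{\mathcal{M}}(x,t)^{-1}$ and states that Theorem \ref{thm_regularity_est} immediately implies the corollary. Your added justification of the pointwise bound via the definition of the regularity scale, and the remark on the $\mu_t$-negligibility of the singular set, are fine elaborations of what the paper leaves implicit.
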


In particular, Theorem \ref{thm_regularity_est} and Corollary \ref{cor_regularity_est} crucially improve the regularity estimates from Head \cite{Head} and Cheeger-Haslhofer-Naber \cite{CHN}.

Let us now sketch the main ideas for our proof of Theorem \ref{thm_diam}.

To get some intuition, imagine first the potential scenario of the so-called fractal tube, as illustrated in Figure \ref{fig:fractal_neck}. The worry is that necks might slowly but steadily change their axes as one moves over an uncontrolled number of scales, and arrange themselves in a Koch snowflake like way, to make the intrinsic diameter arbitrarily large.

\begin{figure}
\includegraphics[width=0.9\textwidth]{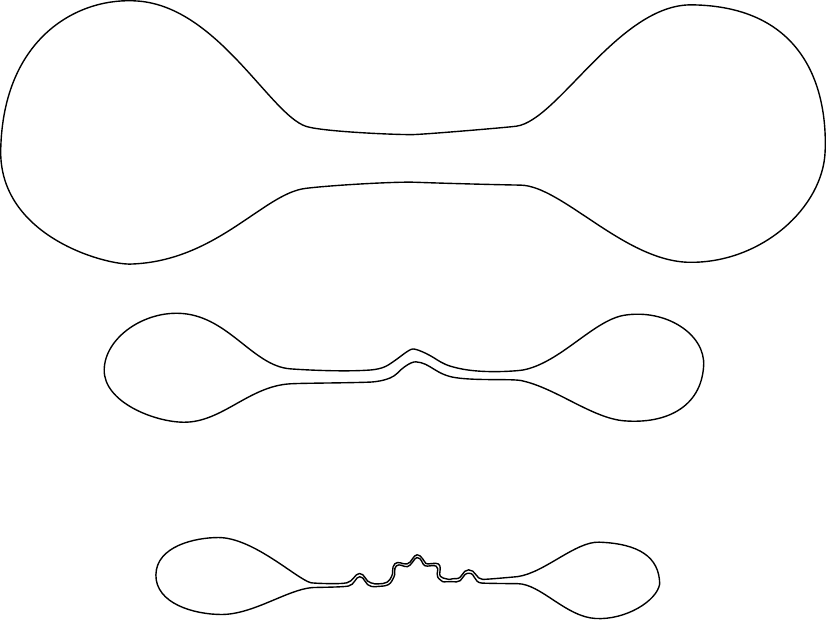}
\caption{Fractal tube}
\label{fig:fractal_neck}
\end{figure}

In a recent breakthrough \cite{CM}, Colding-Minicozzi proved a {\L}ojasiewicz inequality for the mean curvature flow. Their {\L}ojasiewicz inequality implies, roughly speaking, that cylindrical regions can actually only tilt by a controlled amount, \emph{provided} one can ensure a priori that one is very close to a cylinder at all scales in consideration. Colding-Minicozzi applied the {\L}ojasiewicz inequality to prove uniqueness of cylindrical tangent flows \cite{CM}, to derive sharp results about the singular set of mean curvature flow with generic singularities \cite{CM_sing}, and also to derive related sharp results for the arrival time function of a mean convex flow \cite{CM_AT1,CM_AT2}.
In particular, if the flow is two-convex then its space-time singular set is contained in finitely many compact embedded Lipschitz curves together with a countable set of points. This of course immediately rules out the example of an ``exact fractal tube", i.e. a fractal tube as above that becomes singular exactly on a fractal curve.

While the {\L}ojasiewicz inequality from \cite{CM} has been applied many times to derive results about the \emph{singular set}, it hasn't been applied yet in a more quantitative way to derive results about \emph{high curvature regions}. Note that a bound for the size of the singular set by itself, doesn't yield any control for the diameter. For example, one could imagine the scenario of an ``approximate fractal tube" that looks Koch-like shortly before the first singular time, but then only becomes singular with a neck pinching off at one single point.

The main challenge in proving Theorem \ref{thm_diam} is thus to ensure a priori that we can locate enough cylindrical regions propagating all the way from a microscopic scale to a macroscopic scale of definite size, to which we can apply the {\L}ojasiewicz inequality. We achieve this as follows:

Given a two-convex mean curvature flow $\mathcal{M}=\{M_t\subset\mathbb{R}^{n+1}\}_{t\in [0,T)}$, and a time $\bar{t}< T$, we want to derive a bound for $\textrm{diam}(M_{\bar{t}},d_{\bar{t}})$ depending only on certain geometric parameters of the initial hypersurface $M_0$. Using in particular the canonical neighborhood theorem from \cite{HK_surgery}, we first argue (see Proposition \ref{prop_red1} and Proposition \ref{prop_red2}) that it is enough to control the quantity
\begin{multline}
L(M_{\bar{t}}):=\sup\{ \textrm{diam}(N,d_{\bar{t}})\, | \, \textrm{$N\subset M_{\bar{t}}$ is an $\eps$-tube}\\
\textrm{with $H>\bar{H}$ on $N$} \}.
\end{multline}
Here, $\bar{H}$ denotes a very large curvature scale, that in particular is much larger than the curvature scale $H_{\textrm{can}}$ from the canonical neighborhood theorem. An \emph{$\eps$-tube} is a subset $N\subset M_{\bar{t}}$ diffeomorphic to $S^{n-1}\times \mathbb{R}$ such that each $x\in N$ lies on the central sphere of a very strong $\eps$-neck at time $\bar{t}$. Roughly speaking, having a very strong $\eps$-neck means that after rescaling to unit curvature one sees an almost round shrinking cylinder in a very large space-time neighborhood. The precision parameter $\eps$ enters in the rigorous definitions, replacing the informal words ``almost" and ``very large" (see Section \ref{sec_neck_tubes} for the precise definitions). Moreover, we consider a central curve $\gamma$ parametrized by arc-length such that $N$ is covered by $\eps$-necks centered at $p\in \gamma $ and axis parallel to $\partial_s \gamma(p)$.

The central task is to understand how the flow arrived at the tube $N$ at time $t=\bar t$. We show that there is a uniform $\tau>0$ such that around any $p\in \gamma$ and at any time $t\in [\bar t- \tau,\bar t]$ the flow has a strong $\eps_1$-neck (here $\eps \ll \eps_1\ll 1$) centered at $p$ of radius $\sqrt{2(n-1)(t_p-t)}$, where $t_p$ essentially determines the scale of the neck and only depends on $p$. 

To this end, first recall that by the canonical neighborhood theorem all points with $H\geq H_{\textrm{can}}$ have a precise geometric description. The canonical neighborhoods are modeled on so-called $(\alpha,\beta)$-solutions (see Section \ref{sec_can_nbds}), which in turn look neck-like away from cap-like pieces of controlled size. Inspired by \cite{KL_singular}, we prove a backwards-stability result for necks in $(\alpha,\beta)$-solutions (Proposition \ref{prop_backward_stab}). Roughly speaking, the result says that if one sees a neck in an $(\alpha,\beta)$-solution then going far enough back in time one sees an even more precise neck, and in particular not a cap. Combining the neck-stability result with a continuity argument and the assumption that we started off with a very stong $\eps$-tube, we arrive at the conclusion that around any $p\in \gamma$ one sees $\eps_1$-necks of the right radius for a uniform amount $\tau$ backwards in time.

Having done the above, we can finally apply the {\L}ojasiewicz inequality from \cite{CM} to conclude that the total tilt of every neck in $N$, when followed via normal motion from time $\bar{t}$ back to time $\bar{t}-\tau$, is in fact small (Proposition \ref{prop_small_tilt}). Since at time $\bar{t}-\tau$ all necks have macroscopic size, and since overlapping necks must have aligned axes, this gives the desired diameter control.

To prove Theorem \ref{thm_regularity_est}, we first generalize Theorem \ref{thm_diam} to the setting of mean curvature flow with surgery in the framework of \cite{HK_surgery}. The important point is that the diameter bound is independent of the surgery parameters, see \eqref{diam_surgery}. We then combine this with the canonical neighborhood theorem \cite[Thm. 1.22]{HK_surgery} to establish uniform $L^{n-1}$-bounds for the mean curvature under mean curvature flow with surgery, see \eqref{lpmc}.

Given a fixed two-convex initial condition $M_0$, we then consider a sequence $\{M_t^i\}$ of flows with surgery where the surgery parameters degenerate suitably. It follows from the work of Head \cite{Head} and Lauer \cite{Lauer}, that for $i\to \infty$ the sequence of flows with surgery converges to the level set flow. The convergence is in the space-time Hausdorff sense, and also in the sense of varifolds for every time. By lower-semicontinuity, this implies $L^{n-1}$-control for the mean curvature of level set flow with two-convex initial data. Finally, by the local curvature estimate \cite[Thm. 1.8]{HK}, this can be upgraded to $L^{n-1}$-control for the regularity scale.

This article is organized as follows: In Section \ref{sec_prelim}, we introduce our notation and summarize the needed preliminaries. In Section \ref{sec_reduction}, we carry out the reduction of the problem to the case of high curvature tubes. In Section \ref{cylindrical_regions}, we prove the key propositions that have been outlined above and combine them to prove Theorem \ref{thm_diam}. Finally, in Section \ref{sec_sharp_integral}, we generalize the diameter bound to the setting of mean curvature flow with surgery and prove Theorem \ref{thm_regularity_est}.

\textbf{Acknowledgments.} P.G. has been supported by a Fields-Ontario Postdoctoral Fellowship. R.H. has been supported by NSERC grant RGPIN-2016-04331, NSF grant DMS-1406394 and a Connaught New Researcher Award. We thank the Fields Institute and the University of Toronto for providing an excellent research environment. We thank Nick Edelen, John Head, Mohammad Ivaki, Bruce Kleiner, and Felix Schulze for useful discussions.

\section{Notation and preliminaries}\label{sec_prelim}

\subsection{Mean curvature flow}\label{sec_prelim_mcf}
Let $M_0\subset \mathbb{R}^{n+1}$ be a closed embedded hypersurface. Then there exist a unique smooth evolution by mean curvature flow $\mathcal{M}=\{M_t\}_{t\in [0,T)}$ defined on a maximal time interval $[0,T)$. The maximal existence time $T<\infty$ is characterized by 
\begin{equation}
\lim_{t\nearrow T}\max_{M_t}\, \abs{A}=\infty\, .
\end{equation}

A closed embedded hypersurface $M\subset\mathbb{R}^{n+1}$ is called \emph{$2$-convex}, if
\begin{equation}
\lambda_1+\lambda_2>0
\end{equation}
for all $p\in M$, where $\lambda_1\leq \lambda_2\leq \ldots \leq \lambda_n$ are the principal curvatures, i.e. the eigenvalues of the second fundamental form $A$. In particular, every $2$-convex domain has positive mean curvature
\begin{equation}
H>0.
\end{equation}
We also need more quantitative notions of $2$-convexity and embeddedness: A $2$-convex hypersurface $M$ is called \emph{$\beta$-uniformly $2$-convex}, if
\begin{equation}
\lambda_1+\lambda_2\geq \beta H
\end{equation}
for all $p\in M$. An embedded hypersurface $M\subset\mathbb{R}^{n+1}$ is called \emph{$\alpha$-noncollapsed} if it has positive mean curvature and each $p\in M$ admits interior and exterior balls of radius at least $\alpha/H(p)$. By \cite{andrews1,HS_surg}, all the above properties are preserved under the flow.

We also assume that $H\leq\gamma$ initially.  To keep track of the constants, we put $\Balpha=(\alpha,\beta,\gamma)$ and say that $M_0$ is $\Balpha$-controlled. By compactness, every $2$-convex embedded hypersurfaces $M_0$ is $\Balpha$-controlled for some parameters $\alpha,\beta>0,\gamma<\infty$. Some estimates will also depend on a bound $\mathcal{A}$ for the initial area (recall that area is decreasing under the flow). While clearly most constants also depend on the dimension $n$, we usually don't explicitly indicate this in our notation.

\subsection{Necks and tubes}\label{sec_neck_tubes}
In this section, we summarize our basic terminology about necks and tubes.

We say that an embedded hypersurface $M^n\subset\mathbb{R}^{n+1}$ has an \emph{$\eps$-neck} with center $p$ and radius $r$ if the rescaled surface $r^{-1} � (M-p)$ is $\eps$-close in $C^{\lfloor 1/\eps\rfloor}$ in $B_{1/\eps}(0)$ to a round cylinder $S^{n-1}\times\mathbb{R}$ (up to rotation) with center $0$ and radius $1$.

As in \cite[Def. 2.3]{HK_surgery}, we say that a mean curvature flow $\mathcal{M}$ has a \emph{strong $\eps$-neck} with center $p$ and radius $r$ at time $t_0$, if the parabolically rescaled flow $\{r^{-1} (M_{t_0+r^2 t} -p)\}_{t\in(-1,0]}$ is $\eps$-close in $C^{\lfloor 1/\eps\rfloor}$ in $B_{1/\eps}(0) \times (-1, 0]$ to the evolution of a round cylinder $S^{n-1}\times \mathbb{R}$ (up to rotation) with center $0$ and radius $1$ at $t = 0$.

We say that $\mathcal{M}$ has a \emph{very strong $\eps$-neck} with center $p$ and radius $r$ at time $t_0$, if in the above definition $(-1,0]$ can be replaced by $(-2\mathcal{T},0]$, where $\mathcal{T}=\mathcal{T}(2\eps_1,\tfrac{1}{2}\eps_1,\alpha,\beta)$ denotes the constant from Proposition \ref{prop_backward_stab}.\footnote{Here, $\eps_1$ is a certain quality parameter, that will be fixed in Section \ref{sec_conclusion}.}

A subset $N\subset M_{\bar{t}}$ is called an \emph{$\eps$-tube}, if it is diffeomorphic to a cylinder and each $x\in N$ lies on the central sphere of a very strong $\eps$-neck at time $\bar{t}$ (more precisely, the slice of a strong $\eps$-neck at its maximal time).

Finally, as in \cite{BHH}, for each $\eps$-tube we can find an \emph{$\eps$-approximate central curve} $\gamma$. In particular, for each $p\in\gamma$ the vector $\partial_s\gamma(p)$ determines the axis of the $\eps$-neck centered at $p$ (recall that the axis of an $\eps$-neck is well defined up to errors of order $\eps$).

\subsection{Canonical neighborhoods}\label{sec_can_nbds}

We recall the canonical neighborhood theorem from Haslhofer-Kleiner \cite{HK_surgery} in the special case of smooth flows without surgeries.\footnote{In this special of smooth flows without surgeries the proof of the canonical neighborhood theorem simplifies quite a bit. In fact, Theorem \ref{thm_can_nbd} follows directly from the global convergence theorem for $\alpha$-noncollapsed flows \cite[Thm. 1.10]{HK}.} The general case of the canonical neighborhood theorem for mean curvature flow with surgery will only be needed at the end, and its discussion will thus be deferred until Section \ref{sec_sharp_integral}.

\begin{theorem}[{Special case of \cite[Thm 1.22]{HK_surgery}}]\label{thm_can_nbd}
For every $\eps_{\textrm{can}}>0$, there exist a constant $H_{\textrm{can}}(\eps_{\textrm{can}})=H_{\textrm{can}}(\Balpha,\eps_{\textrm{can}})<\infty$ with the following significance.
If $\mathcal{M}$ is a smooth mean curvature flow with $\Balpha$-controlled initial data, then any $(x,t)\in\mathcal{M}$ with $H(x,t)\geq H_{\textrm{can}}(\eps_{\textrm{can}})$ is $\eps_{\textrm{can}}$-close to a $\beta$-uniformly $2$-convex ancient $\alpha$-noncollapsed mean curvature flow.
\end{theorem}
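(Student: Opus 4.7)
The plan is to prove this by a standard blow-up/contradiction argument, using the global convergence theorem for $\alpha$-noncollapsed flows \cite[Thm. 1.10]{HK} as the engine, as the footnote itself suggests.

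First, suppose towards a contradiction that the statement fails. Then one can fix $\eps_{\textrm{can}}>0$ and extract a sequence of smooth mean curvature flows $\mathcal{M}^i$ starting from $\Balpha$-controlled hypersurfaces $M_0^i$, together with space-time points $(x_i,t_i)\in\mathcal{M}^i$, such that $H(x_i,t_i)\to\infty$ while no parabolic rescaling of $\mathcal{M}^i$ based at $(x_i,t_i)$ is $\eps_{\textrm{can}}$-close to a $\beta$-uniformly $2$-convex, $\alpha$-noncollapsed ancient mean curvature flow. Set $\lambda_i:=H(x_i,t_i)$ and consider the parabolically rescaled flows
\begin{equation}
\tilde{\mathcal{M}}^i:=\lambda_i\cdot(\mathcal{M}^i-(x_i,t_i)),
\end{equation}
which satisfy $\tilde{H}(0,0)=1$ and are defined on time intervals $(-\lambda_i^2 t_i,0]$ with $\lambda_i^2 t_i\to\infty$. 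The key observation is that all flows $\mathcal{M}^i$ inherit from their initial data the preserved conditions of $\alpha$-noncollapsedness and $\beta$-uniform $2$-convexity (cf. \cite{andrews1,HS_surg}), and these scale-invariant conditions pass to the rescalings $\tilde{\mathcal{M}}^i$.

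Next, I would invoke the global convergence theorem \cite[Thm. 1.10]{HK}, which asserts that any sequence of $\alpha$-noncollapsed mean curvature flows with bounded mean curvature at a basepoint subsequentially converges, in the smooth pointed Cheeger-Hamilton sense, to an $\alpha$-noncollapsed ancient mean curvature flow. Applied to $\{\tilde{\mathcal{M}}^i\}$, this yields a subsequence converging smoothly on all compact subsets of space-time to a limit flow $\tilde{\mathcal{M}}^\infty$ that is ancient, $\alpha$-noncollapsed, and (by passing the lower bound $\lambda_1+\lambda_2\geq\beta H$ to the limit) also $\beta$-uniformly $2$-convex. In particular, for every prescribed precision and every large compact set, $\tilde{\mathcal{M}}^i$ is eventually as close as we like to $\tilde{\mathcal{M}}^\infty$ in the $C^{\lfloor 1/\eps_{\textrm{can}}\rfloor}$-norm. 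This directly contradicts the standing assumption that no rescaling is $\eps_{\textrm{can}}$-close to such a limit, completing the proof.

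The main obstacle, at least in principle, is guaranteeing that the blow-up limit actually exists as a smooth ancient flow rather than degenerating; this is precisely what the $\alpha$-noncollapsedness buys, since it provides the local area and curvature bounds needed to extract a smooth limit via a standard pseudolocality/compactness package. Once one accepts \cite[Thm. 1.10]{HK} as a black box, the argument is essentially immediate, which is why the paper treats it as a ``special case''. The only mild bookkeeping concerns the backward time existence $\lambda_i^2 t_i\to\infty$, which holds since the initial curvature bound $H\leq\gamma$ together with $H(x_i,t_i)\to\infty$ forces the singular blow-up times $t_i$ to stay uniformly bounded below away from $0$ in the original scale.
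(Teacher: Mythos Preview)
Your proposal is correct and is precisely the argument the paper has in mind: the paper does not give a standalone proof but notes in the footnote that Theorem \ref{thm_can_nbd} ``follows directly from the global convergence theorem for $\alpha$-noncollapsed flows \cite[Thm. 1.10]{HK}'', and your contradiction/blow-up argument is the standard unpacking of that remark. The only minor sharpening worth making is that the negation should read ``the rescaling by $H(x_i,t_i)$ is not $\eps_{\textrm{can}}$-close'' (since the paper's notion of $\eps_{\textrm{can}}$-closeness fixes the rescaling factor), which is exactly the $\lambda_i$ you then use anyway.
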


For brevity we refer to ``$\beta$-uniformly $2$-convex ancient $\alpha$-noncollapsed mean curvature flows" simply as \emph{$(\alpha,\beta)$-solutions}. We recall that being $\eps_{\textrm{can}}$-close is understood in a scale invariant sense. The conclusion in Theorem \ref{thm_can_nbd} thus means that the flow $\mathcal{M}'$ which is obtained from $\mathcal{M}$ by shifting $(x,t)$ to the origin and parabolically rescaling by $H^{-1}(x,t)$ is $\eps_{\textrm{can}}$-close in $C^{\lfloor 1/\eps_{\textrm{can}}\rfloor}$ in $B_{1/\eps_{\textrm{can}}}(0)\times (-\eps_{\textrm{can}}^{-2},0]$ to an $(\alpha,\beta)$-solution. 

The structure of $(\alpha,\beta)$-solutions has been analyzed in \cite[Sec. 3.1]{HK_surgery}. In particular, $(\alpha,\beta)$-solutions are always convex and look neck-like away from caps of controlled size (in a scale invariant sense, as always).

\subsection{{\L}ojasiewicz-Simon inequality}\label{subsec_LS} For a hypersurface $M \subset \mathbb{R}^{n+1}$ the Gaussian surface area is defined by
\begin{equation}\label{huisken_density}
F(M)=(4\pi)^{-n/2}\int_M e^{-|x|^2/4} d\mathcal{H}^n.
\end{equation}
The entropy $\lambda$ is defined as supremum of the Gaussian area over all centers and scales:
\begin{equation}
\lambda(M)=\sup_{a>0,b\in\mathbb{R}^{n+1}}F(a M-b).
\end{equation}
By Huisken's monotonicity formula \cite{Huisken_monotonicity} the entropy is nonincreasing under mean curvature flow. Thus, the entropy of all hypersurfaces under consideration will be bounded above by some constant $\Lambda=\Lambda(\alpha,\gamma,\mathcal{A})$.
Also recall that if $\{M_t\subset\mathbb{R}^{n+1}\}_{t<0}$ moves by mean curvature flow, then $\Sigma_s=\tfrac{1}{\sqrt{-t}}M_t$, $s=-\log(-t)$, moves by rescaled mean curvature flow
\begin{equation}
\partial_s x= H+\tfrac{1}{2}x^\perp.
\end{equation}
The rescaled mean curvature flow is the negative gradient flow of the $F$-functional.

\begin{theorem}[{Colding-Minicozzi \cite[Thm. 6.1]{CM}}]\label{thm_lojsim}

There exist constants $K,\bar{R}<\infty$, $\eps_{L}>0$, $\mu<1$ (depending only on the dimension and an upper bound for the entropy) such that if $\{\Sigma_s\}_{s\in [t-1,t+1]}$ is a rescaled mean curvature flow such that $B_{\bar{R}}\cap M_s$ is for each $s$ a $C^{2,\alpha}$ graph with norm at most $\eps_{L}$ over the cylinder $Z=S^{n-1}_{\sqrt{2(n-1)}}\times \mathbb{R}$, then
\begin{equation}\label{discrete_LS}
|F(\Sigma_t)-F(Z)|^{1+\mu} \leq K \left( F(\Sigma_{t-1})-F(\Sigma_{t+1})\right).
\end{equation} 
\end{theorem}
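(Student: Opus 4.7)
My plan is to reduce the discrete inequality to a pointwise (continuous) {\L}ojasiewicz--Simon inequality of the form
\[
|F(\Sigma)-F(Z)|^{1+\mu} \leq K \, \norm{H+x^\perp/2}^2_{L^2(e^{-|x|^2/4})}
\]
for every hypersurface $\Sigma$ whose intersection with $B_{\bar R}$ is a $C^{2,\alpha}$-graph over $Z$ of norm at most $\eps_L$. Granting this pointwise bound, I would combine it with the fact that rescaled mean curvature flow is the negative gradient flow of $F$ and so
\[
\tfrac{d}{ds} F(\Sigma_s) = -\norm{H+x^\perp/2}^2_{L^2(e^{-|x|^2/4})}.
\]
Integrating over $[t-1,t+1]$ gives $F(\Sigma_{t-1})-F(\Sigma_{t+1}) = \int_{t-1}^{t+1}\norm{H+x^\perp/2}^2\,ds$, which by the pointwise bound is at least $K^{-1}\int_{t-1}^{t+1}|F(\Sigma_s)-F(Z)|^{1+\mu}\,ds$. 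Since $F$ is monotone along the flow, either for $s\in[t-1,t]$ (when $F(\Sigma_t)\geq F(Z)$) or for $s\in[t,t+1]$ (when $F(\Sigma_t)\leq F(Z)$), the integrand is pointwise at least $|F(\Sigma_t)-F(Z)|^{1+\mu}$, so the integral itself is at least that quantity, completing the reduction.

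The pointwise {\L}ojasiewicz--Simon inequality itself I would establish via the classical Simon scheme, adapted to the non-compact critical point $Z$. Write $\Sigma\cap B_{\bar R}$ as the normal graph of a function $u$ over $Z\cap B_{\bar R}$. The $F$-functional is real-analytic in $u$ in an appropriate weighted Banach space, with critical point at $u\equiv 0$ and linearization the Jacobi (stability) operator $L=\Delta_Z - \tfrac{1}{2}\langle x,\nabla\cdot\rangle+|A_Z|^2+\tfrac{1}{2}$. One decomposes $u=v+w$ with $v$ in an approximate finite-dimensional ``neutral'' subspace (spanned by the low-frequency Jacobi fields on $Z$, including translations along the axis together with the non-positive eigenfunctions of $L$) and $w$ in its $L^2(e^{-|x|^2/4})$-orthogonal complement. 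On the orthogonal complement $L$ is coercive, which yields a Lyapunov--Schmidt reduction producing bounds of the form $\norm{\nabla F(u)}^2 \gtrsim \norm{w}^2$ and $|F(u)-F(0)-F(v)|\lesssim \norm{w}^2$. On the finite-dimensional slice one applies the classical finite-dimensional {\L}ojasiewicz inequality for real-analytic functions to the reduced functional $v\mapsto F(v)$, giving $|F(v)-F(0)|^{1+\mu}\leq C\norm{\nabla F(v)}^2$. Assembling these three estimates yields the pointwise inequality.

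The main obstacle is the non-compactness of $Z$: the $\mathbb{R}$-factor produces a continuous one-parameter family of critical points (the translations), and outside $B_{\bar R}$ one has no graphical information whatsoever. The key technical point, and the reason $\bar R$ depends on the entropy upper bound, is to control the contribution of $F$ from $\{|x|>\bar R\}$ using the Gaussian weight: since $\int_{|x|>\bar R}e^{-|x|^2/4}\,d\mathcal{H}^n$ decays super-exponentially in $\bar R$, and since an entropy bound prevents too much mass of $\Sigma$ from escaping to infinity, one can choose $\bar R$ large enough so that the outer contributions are absorbed into error terms and one effectively works only on the compact piece $Z\cap B_{\bar R}$. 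This truncation makes the neutral space of $L$ effectively finite-dimensional (modulo exponentially small errors), legitimizes the Lyapunov--Schmidt reduction, and is the hardest part of the argument to execute rigorously, since one has to verify that the cut-off procedure respects the real-analytic structure of $F$ well enough for Simon's scheme to apply.
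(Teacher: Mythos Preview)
The paper does not prove this theorem: it is quoted verbatim from Colding--Minicozzi \cite[Thm.~6.1]{CM} and used as a black box (the paper only adds the remark that the right-hand side equals $\int_{t-1}^{t+1}|\nabla_{\Sigma_s}F|^2\,ds$). So there is no ``paper's own proof'' to compare against.

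That said, your outline is a reasonable high-level description of the Simon strategy, and your reduction from a pointwise {\L}ojasiewicz inequality to the discrete one is correct and clean. You also correctly flag the non-compactness of $Z$ as the essential difficulty. However, your sketch of the pointwise inequality glosses over precisely the issues that make the Colding--Minicozzi argument long and nontrivial. The kernel of $L$ on $Z$ contains Jacobi fields (for instance the mode coming from the Hermite polynomial $h_2$ on the axis) that are \emph{not} tangent to any family of shrinkers, so the critical set is not a smooth manifold with non-degenerate normal Hessian in the naive sense; this obstructs a direct Lyapunov--Schmidt reduction of the type you describe. Colding--Minicozzi handle this by proving two separate {\L}ojasiewicz-type inequalities with rather different arguments (one controlling the distance to $Z$, the other the $F$-gap), and the discrete form of Theorem~6.1 is not simply the integral of a pointwise inequality but is built into how they assemble these estimates. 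Your proposal would need substantial additional work---essentially reproducing large parts of \cite{CM}---to be made rigorous, and the cleanest route is to cite their result, as the paper does.
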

To see that \eqref{discrete_LS} is indeed a discrete {\L}ojasiewicz-Simon gradient inequality, it helps to rewrite the right hand side using
\begin{equation}
F(\Sigma_{t-1})-F(\Sigma_{t+1})=\int_{t-1}^{t+1}|\nabla_{\Sigma_s} F|^2 \, ds.
\end{equation}

\section{Reduction to the cylindrical case}\label{sec_reduction}

Let $\mathcal{M}=\{M_t\}_{t\in [0,T)}$ be a 2-convex flow with $\Balpha$-controlled initial condition, and area bounded by $\mathcal{A}$. 

Let $\bar{t}<T$. We want to show that $\textrm{diam}(M_{\bar{t}})\leq C$, for some $C<\infty$ depending only on $\Balpha$ and $\mathcal{A}$.

\subsection{Reduction to the high curvature case}

We will argue first that it is enough to estimate the length of geodesics that stay entirely in regions of high curvature. To this end, assume that $\bar{H}$ is a constant with $\bar{H}\geq H_{\textrm{can}}(\eps_{\textrm{can}})$, where the precision parameters $\eps>0$ and $\eps_{\textrm{can}}=\eps_{\textrm{can}}(\eps,\Balpha)>0$ are small enough as in \cite{HK_surgery}, and consider the quantity
\begin{multline}
D(M_{\bar{t}}):=\sup \{ \ell(\gamma) \, | \, \textrm{$\gamma$ is a minimizing geodesic in $(M_{\bar{t}},d_{\bar{t}})$,}\\
\textrm{and $H>2\bar{H}$ along $\gamma$}\}.
\end{multline}

\begin{proposition}\label{prop_red1}
There exists a constant $\bar{N}=\bar{N}(\alpha,\mathcal{A},\bar{H})<\infty$ such that
\begin{equation}
\textrm{diam}(M_{\bar{t}})\leq \bar{N}+ (\bar{N}+1)D(M_{\bar{t}}).
\end{equation}
\end{proposition}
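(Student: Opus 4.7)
The strategy is to take a minimizing geodesic $\gamma:[0,\ell]\to M_{\bar{t}}$, parametrized by arc length, whose length $\ell$ realizes $\textrm{diam}(M_{\bar{t}})$, and to split it into pieces lying in the low-curvature set $L:=\{H\leq 2\bar{H}\}$ and the high-curvature set $U:=\{H>2\bar{H}\}$. Each high-curvature piece is automatically a sub-minimizing-geodesic on which $H>2\bar{H}$, so its length is controlled by $D(M_{\bar{t}})$. The proposition will then follow as soon as we show that the total low-curvature contribution, and the number of alternations between $L$ and $U$, are a priori bounded in terms of $\alpha,\mathcal{A},\bar{H}$.

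\textbf{Geometry of the low-curvature region.} By $\alpha$-noncollapsing (preserved along the flow) we have $|A|\leq H/\alpha\leq 2\bar{H}/\alpha$ on $L$, and every $x\in L$ admits an interior tangent Euclidean ball of radius at least $\alpha/(2\bar{H})$. Standard local regularity for $\alpha$-noncollapsed mean convex hypersurfaces (e.g.\ \cite{HK}) then yields a uniform lower area bound $|B^{M_{\bar{t}}}_{r_0}(x)|\geq c(n,\alpha)\,r_0^n$ for the intrinsic ball at a fixed scale $r_0:=c(\alpha)/\bar{H}$. Combining this lower bound with the global area bound $|M_{\bar{t}}|\leq\mathcal{A}$ and a Vitali-type packing argument, we conclude: (i) $L$ has at most $N_0=N_0(\alpha,\mathcal{A},\bar{H})$ connected components, and (ii) each component $L_i$ is covered by a connected chain of at most $N_0$ such intrinsic balls, so that any two points $x,y\in L_i$ can be joined by a path in $M_{\bar{t}}$ of length at most $D_0:=2N_0\,r_0$.

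\textbf{Decomposition of $\gamma$.} Because $\gamma$ is minimizing, $\gamma(t_1),\gamma(t_2)\in L_i$ forces $|t_2-t_1|\leq d_{\bar{t}}(\gamma(t_1),\gamma(t_2))\leq D_0$, so $\gamma^{-1}(L_i)$ is contained in a single time-window $W_i\subset[0,\ell]$ of length $\leq D_0$. Set $W:=\bigcup_i W_i\supset\gamma^{-1}(L)$; then $|W|\leq N_0 D_0$, and the complement $[0,\ell]\setminus W\subset\gamma^{-1}(U)$ consists of at most $N_0+1$ open intervals. Each such interval is a sub-arc of the minimizing geodesic $\gamma$ along which $H>2\bar{H}$, and hence is itself a minimizing geodesic in $(M_{\bar{t}},d_{\bar{t}})$ with $H>2\bar{H}$ throughout (at the boundary points where $H=2\bar{H}$ one approximates from inside and lets the error go to zero), of length at most $D(M_{\bar{t}})$. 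Summing gives $\ell\leq N_0 D_0+(N_0+1)D(M_{\bar{t}})$, so the constant $\bar{N}:=\max(N_0 D_0,N_0)$ does the job.

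\textbf{Main obstacle.} The only delicate ingredient is the lower area bound for intrinsic balls centered in $L$, which requires converting the interior-tangent-ball noncollapsing at scale $\sim 1/\bar{H}$ into uniform graphical $C^{1,\alpha}$ control of $M_{\bar{t}}$ on that scale. This is by now standard for $\alpha$-noncollapsed mean convex flows, but one must invoke it in such a way that the resulting constants depend only on $\alpha$ (and $n$), not on any further geometric features of $M_{\bar{t}}$; this is what ensures the final dependence $\bar{N}=\bar{N}(\alpha,\mathcal{A},\bar{H})$ claimed in the proposition.
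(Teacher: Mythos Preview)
Your proof is correct and relies on the same key input as the paper---the lower area bound for intrinsic balls centered at low-curvature points, obtained from $\alpha$-noncollapsing together with the local curvature estimate \cite[Thm.~1.8]{HK}---but it reaches the conclusion by a more global route. The paper never analyzes the set $L=\{H\le 2\bar H\}$ as such: it works directly on the minimizing geodesic $\gamma$, picks a maximal $1$-separated collection $s_1,\dots,s_N$ of parameters with $H(\gamma(s_i))\le 2\bar H$, uses the lower area bound on the disjoint intrinsic $\tfrac12$-balls at the points $\gamma(s_i)$ to get $N\le\bar N:=\mathcal{A}/c$, and then simply removes $N$ unit intervals around the $s_i$ to leave at most $\bar N+1$ high-curvature sub-geodesics. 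Your approach instead first proves a structural fact about all of $L$ (finitely many components, each of bounded intrinsic diameter via a connected chain of covering balls) and only then feeds this into the decomposition of $\gamma$. The paper's version is leaner---no discussion of components of $L$ or ball-chains is needed---and delivers the constant in the exact form $\bar N+(\bar N+1)D$; yours proves a little more about $L$ but produces an additive constant of order $N_0^2 r_0$ rather than $N_0$. One small remark on your ``main obstacle'': applying \cite[Thm.~1.8]{HK} needs a definite amount of backward time, so for very small $\bar t$ the paper falls back on short-time doubling from the initial bound $H\le\gamma$, which is where the constant picks up an implicit dependence on $\Balpha$ (not just $\alpha$) that the statement does not make explicit.
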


\begin{proof}
Let $\gamma:[0,L]\to (M_{\bar{t}},d_{\bar{t}})$ be a minimizing geodesic parametrized by arclength.
Choose a maximal collection $s_1,\ldots,s_N\in [0,L]$ such that $H(\gamma(s_i))\leq 2\bar{H}$ and $|s_i-s_j|\geq 1$.
Consider then the geodesic balls $B_i$ with center $\gamma(s_i)$ and radius $1/2$.

We claim that the curvature is uniformly bounded in small balls centered at $\gamma(s_i)$ of definite size (depending on $\bar H$ and $\Balpha$):
To this end, first observe that the $\alpha$-noncollapsing and the bound $H\leq \gamma$ at the initial time imply that $|A|\leq \sqrt{n}\gamma/\alpha$ at $t=0$. By standard doubling estimates there is a constant $T_1=T_1(\Balpha)>0$ such that if $\bar{t}\leq T_1$ then $|A|\leq 2\sqrt{n}\gamma/\alpha$ on the whole hypersurface. If $\bar{t}> T_1$ using that $H(\gamma(s_i))\leq 2\bar{H}$ we can apply the local curvature estimate \cite[Thm. 1.8]{HK} to get curvature control in a ball centered at $\gamma(s_i)$ of definite size.

By the above and the $\alpha$-noncollapsing we get a lower bound
\begin{equation}
\mathcal{H}^n(B_i)\geq c(\Balpha,\bar{H})>0.
\end{equation}
Since the balls $B_i$ are disjoint, this implies that
\begin{equation}
N\leq \mathcal{A}/c=:\bar{N}.
\end{equation}
Now restricting $\gamma$ to the connected components of $[0,L]\setminus \bigcup_{i=1}^N (s_i-\tfrac{1}{2},s_i+\tfrac{1}{2})$ gives at most $\bar{N}+1$ minimizing geodesics with the additional property that $H>2\bar{H}$ along them. The assertion follows.
\end{proof}

\subsection{Applying the canonical neighborhood theorem}

Consider the quantity
\begin{multline}
L(M_{\bar{t}}):=\sup\{ \textrm{diam}(N,d_{\bar{t}})\, | \, \textrm{$N\subset M_{\bar{t}}$ is an $\eps$-tube}\\
\textrm{with $H>\bar{H}$ on $N$} \}.
\end{multline}

\begin{proposition}\label{prop_red2}
There exists a constant $C=C(\Balpha,\eps,H_{\textrm{can}})<\infty$ such that
\begin{equation}
D(M_{\bar{t}})\leq C+C L(M_{\bar{t}}).
\end{equation}
\end{proposition}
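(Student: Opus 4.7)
The plan is to apply the canonical neighborhood theorem pointwise along a high-curvature minimizing geodesic, partition the geodesic into \emph{neck arcs} (which lie inside $\eps$-tubes and thus have length at most $L(M_{\bar t})$) and \emph{cap arcs} (each of length $O(1/\bar H)$), and then bound the number of such arcs.

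Concretely, let $\gamma:[0,\ell]\to (M_{\bar t},d_{\bar t})$ be a unit-speed minimizing geodesic with $H(\gamma(s))>2\bar H\geq 2H_{\textrm{can}}(\eps_{\textrm{can}})$ throughout. Theorem \ref{thm_can_nbd} yields an $\eps_{\textrm{can}}$-close $(\alpha,\beta)$-solution model at each $(\gamma(s),\bar t)$ at scale $1/H(\gamma(s))$. Choosing $\eps_{\textrm{can}}$ sufficiently small in terms of $\eps$ and $\Balpha$, the structure theorem for $(\alpha,\beta)$-solutions in \cite[Sec.~3.1]{HK_surgery} ensures that every $\gamma(s)$ is either the center of a very strong $\eps$-neck at time $\bar t$, or else lies in a ``cap region'' of diameter $O(1/H(\gamma(s)))=O(1/\bar H)$. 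Let $A\subset[0,\ell]$ be the (open) set of neck times and set $B=[0,\ell]\setminus A$.

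On each connected component $I_k$ of $A$, the $\eps$-neck axis at $\gamma(s)$ depends continuously on $s$ and is $O(\eps)$-close to $\partial_s\gamma$; the overlapping $\eps$-necks centered on $\gamma(I_k)$ therefore fit together into a single $\eps$-tube $N_k\subset M_{\bar t}$, which we enlarge maximally subject to $H>\bar H$. The definition of $L(M_{\bar t})$ then yields $\ell(\gamma|_{I_k})\leq \diam(N_k,d_{\bar t})\leq L(M_{\bar t})$. On each connected component $J$ of $B$, the arc $\gamma|_J$ is confined to a single cap region of diameter $O(1/\bar H)$, hence $\ell(\gamma|_J)\leq C(\Balpha)/\bar H$. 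To bound the number of components of $B$, observe that each such cap region contains a ``tip'' ball of $\mathcal{H}^n$-area at least $c(\Balpha)/\bar H^n$, and distinct caps encountered by a minimizing geodesic must be disjoint (otherwise $\gamma$ could be shortened); thus the number of cap arcs is at most $\mathcal{A}\bar H^n/c$, a constant depending only on $\Balpha,\mathcal{A},H_{\textrm{can}}$. Since tube arcs and cap arcs alternate along $\gamma$, the number of tube arcs is also controlled, and summing contributions gives $\ell(\gamma)\leq C+CL(M_{\bar t})$; taking the supremum over admissible geodesics proves the proposition.

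The main technical obstacle is the gluing step for neck arcs: one must verify that along a connected arc of neck centers the individual $\eps$-necks really align into a single $\eps$-tube rather than splitting into several with mismatched axes. This is where the \emph{very strong} neck requirement pays off, together with the uniform continuity of the neck axis as a function of its center at precision $\eps_{\textrm{can}}$. A secondary subtlety is the cap-counting step, which quietly invokes the running area bound $\mathcal{A}$ on $M_{\bar t}$ to convert a lower area bound per cap into a global count.
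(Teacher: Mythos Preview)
Your overall strategy---apply canonical neighborhoods pointwise, split into neck arcs and cap arcs, bound each type separately---is in the same spirit as the paper, but the cap-counting step has a genuine gap. You claim each cap region contributes $\mathcal{H}^n$-area at least $c(\Balpha)/\bar H^{n}$. That would require an \emph{upper} bound on $H$ at the cap, but along $\gamma$ you only know $H>2\bar H$; the curvature at a cap can be arbitrarily large, so the cap can have arbitrarily small area. Area counting therefore gives no bound on the number of cap arcs. The disjointness claim (``otherwise $\gamma$ could be shortened'') is also not justified: a minimizing geodesic can enter and leave a topological disk through distinct boundary points without any shortening being available, and nothing prevents it from revisiting a nearby cap at a different scale under your local decomposition. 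Finally, even if the counting worked, it would make $C$ depend on $\mathcal{A}$, which the proposition as stated does not allow.

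The paper avoids all of this by invoking the \emph{global} structure result behind \cite[Cor.~1.25]{HK_surgery} rather than counting: the connected high-curvature region containing $\gamma$ is either a compact convex model of controlled geometry, or a \emph{single} $\eps$-tube with at most \emph{two} caps (or with its ends identified). Hence there is exactly one tube contribution, bounded by $L(M_{\bar t})$, and at most two cap contributions, each of diameter $\leq C(\eps)H_{\textrm{can}}^{-1}$. No counting is needed, and the resulting constant depends only on $\Balpha,\eps,H_{\textrm{can}}$. To repair your argument, replace the area-based cap count with this structural fact.
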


\begin{proof}
Let $\gamma$ be a minimizing geodesic in $(M,d_{\bar{t}})$ with $H>2\bar{H}$ along $\gamma$. We can apply the canonical neighborhood theorem (Theorem \ref{thm_can_nbd}) at each point $(\gamma(s),\bar{t})$. If the canonical model at some point is a convex solution of controlled geometry, then $\gamma$ has controlled length and we are done. In all other cases, arguing as in the proof of \cite[Cor. 1.25]{HK_surgery} we see that $\gamma$ must be contained in an $\eps$-tube, possibly capped at one or both ends, or with its ends identified. The caps have diameter at most $C(\eps)H_{\textrm{can}}^{-1}$, and thus contribute at most a controlled amount to the length of $\gamma$. Similarly, in the case that the ends of the $\eps$-tube are identified, we can also simply ignore a piece of small size, to reduce it again to the case of an $\eps$-tube. The assertion follows.
\end{proof}

\section{Estimating the length of cylindrical regions}\label{cylindrical_regions}

\subsection{Small axis tilting under a priori assumptions}
In this section, we consider mean curvature flows under the a priori assumption that they are close to cylinders along a large -- possibly uncontrolled -- number of scales.
We assume that the space-time center point is fixed, but a priori the axis is allowed to be different at every scale. Using methods from \cite{CM}, we show that the total tilt of the axis is in fact small.

\begin{proposition}\label{prop_small_tilt}
For all $\eps_0>0$ there exists an $\eps_1=\eps_1(\eps_0,\Lambda)>0$ with the following significance. Let $\mathcal{M}$ be a mean curvature flow with entropy bounded by $\Lambda$, and suppose there are $p\in\mathbb{R}^{n+1}$ and $t_0<t_1<t_\ast$, such that for all $t\in[t_0,t_1]$, $\mathcal{M}$ has a strong $\eps_1$-neck with center $p$ and radius $\sqrt{2(n-1)(t_\ast-t)}$ at time $t$.
Then there exists a $v\in\mathbb{R}^{n+1}$ such that for all $t\in[t_0,t_1]$, $\mathcal{M}$ has a strong $\eps_0$-neck with center $p$, radius $\sqrt{2(n-1)(t_\ast-t)}$ at time $t$, and axis in the fixed direction $v$.
\end{proposition}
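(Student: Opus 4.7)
The plan is to work in rescaled mean curvature flow coordinates centered at $(p, t_\ast)$. Setting $\Sigma_s := (t_\ast - t)^{-1/2}(M_t - p)$ with $s = -\log(t_\ast - t)$ and $s_i := -\log(t_\ast - t_i)$, the family $\{\Sigma_s\}_{s \in [s_0, s_1]}$ is a rescaled MCF, i.e.\ the negative $L^2(e^{-|x|^2/4})$-gradient flow of the $F$-functional. The strong $\eps_1$-neck hypothesis translates into the assertion that $\Sigma_s$ is $\eps_1$-close in the $C^{\lfloor 1/\eps_1 \rfloor}$ sense on $B_{1/\eps_1}$ to a rotated copy $R_s Z$ of the cylinder $Z = S^{n-1}_{\sqrt{2(n-1)}} \times \mathbb{R}$, for every $s$ in a slightly enlarged interval (using the strong-ness of the neck, i.e.\ the parabolic backward information). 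For $\eps_1$ sufficiently small depending on $(\eps_L,\bar R,\Lambda)$, $\Sigma_s$ is thus a $C^{2,\alpha}$ graph over $R_s Z$ of norm $\leq \eps_L$ on $B_{\bar R}$, and conjugating by $R_s$ on each unit backward time window shows that Theorem \ref{thm_lojsim} applies at every integer $s \in (s_0+1, s_1-1)$.

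Let $v_s \in S^n$ denote the axis direction of $R_s Z$; this is well-defined up to sign and varies continuously in $s$ by the pointwise uniqueness of the best-fit cylinder. Set $a_s := F(\Sigma_s) - F(Z) \geq 0$. Since $F$ is rotation-invariant and $\Sigma_s$ is $C^0_{\mathrm{loc}}$-close to $R_s Z$, we have $a_{s_0} \leq \eta(\eps_1)$ with $\eta(\eps_1) \to 0$ as $\eps_1 \to 0$. Theorem \ref{thm_lojsim} yields the discrete Łojasiewicz--Simon bound $a_s^{1+\mu} \leq K(a_{s-1} - a_{s+1})$ at every such integer $s$, and the standard Łojasiewicz iteration (as in the proof of uniqueness of cylindrical tangent flows in \cite{CM}) produces
\begin{equation*}
\sum_s \sqrt{a_s - a_{s+1}} \,\leq\, C\, a_{s_0}^{(1-\mu)/2}.
\end{equation*}
Cauchy--Schwarz bounds the $L^2$-motion of $\Sigma_\tau$ over $\tau \in [s, s+1]$ by $\sqrt{a_s - a_{s+1}}$, and standard parabolic Schauder estimates upgrade this to a $C^{2,\alpha}$ bound on the change of the graph function representing $\Sigma_\tau$ over a fixed reference cylinder, whence $|v_{s+1} - v_s| \leq C\sqrt{a_s - a_{s+1}}$. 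Summing yields $|v_s - v_{s_0}| \leq C\, a_{s_0}^{(1-\mu)/2}$ uniformly on $[s_0, s_1]$, which can be made arbitrarily small by shrinking $\eps_1$. We then take $v := v_{s_0}$.

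The main technical obstacle is the last step: converting a uniformly small axis tilt $|v_s - v| \leq \delta(\eps_1)$ into the statement that $\mathcal{M}$ has a strong $\eps_0$-neck at every $t \in [t_0, t_1]$ with the \emph{fixed} axis $v$. A rotation taking axis $v_s$ to axis $v$ perturbs the reference cylinder, on the scale-invariant ball $B_{1/\eps_0}$, by a $C^{\lfloor 1/\eps_0 \rfloor}$ amount bounded by $C(\eps_0)|v_s - v|$. Combined with the original $\eps_1$-closeness, this produces the desired $\eps_0$-closeness with the fixed axis $v$ provided $\eps_1$ is sufficiently small relative to $\eps_0$---this is the source of the dependence $\eps_1 = \eps_1(\eps_0, \Lambda)$. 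The parabolic-backward (strong) part of the neck condition at each $t$ is inherited from the strong $\eps_1$-neck hypothesis, combined with the same tilt estimate applied slightly backwards in time, which is precisely why we assumed the hypothesis on a slightly enlarged interval. A minor note: when $s_1 - s_0 \leq 2$ the Łojasiewicz iteration is unnecessary, as the tilt bound then follows directly from the gradient-flow $L^2$-estimate on a bounded time window.
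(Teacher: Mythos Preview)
Your argument is correct and follows essentially the same route as the paper: pass to the rescaled flow centered at $(p,t_\ast)$, apply Theorem~\ref{thm_lojsim}, sum the square roots of the $F$-drops via a discrete {\L}ojasiewicz iteration, and control the total motion by Cauchy--Schwarz; your final two paragraphs are in fact more explicit than the paper's closing ``and the assertion follows.'' One small correction: the claim $a_s=F(\Sigma_s)-F(Z)\geq 0$ is not justified by the hypotheses (closeness to a cylinder only yields $|a_s|$ small, since the cylinder is a saddle of $F$), so the explicit bound $\sum_s\sqrt{a_s-a_{s+1}}\leq C\,a_{s_0}^{(1-\mu)/2}$ is not available as stated---the paper (Lemma~\ref{sum}, equivalently \cite[Lem.~6.9]{CM}, which you cite) handles the possible sign change by time-reversal on the negative tail and concludes only that the sum is $<\eps$ once $\sup_s|a_s|$ is small, which is all you need.
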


\begin{proof}
Our argument is a variant of the one from Colding-Minicozzi \cite[Sec. 6]{CM}. Consider the rescaled mean curvature flow $\Sigma_s=\frac{1}{\sqrt{t_\ast-t}}M_t$, $s=-\log(t_\ast-t)$. If $\eps_1$ is small enough (depending on $\varepsilon_L=\eps_L(\Lambda), \bar R=\bar{R}(\Lambda)$) then the hypothesis of Theorem \ref{thm_lojsim} are satisfied. Hence,
\begin{equation}
|F(\Sigma_s) - F(Z)|^{1+\mu} \leq K(F(\Sigma_{s-1}) - F(\Sigma_{s+1})),
\end{equation}
for every $s\in[s_0,s_1]$, where $s_0=\lceil - \log(t_\ast-t_0) +1\rceil, s_1=\lfloor -\log(t_\ast-t_1)-1\rfloor$.

Applying the discrete {\L}ojasiewicz lemma (Lemma \ref{sum}) for the function $f(s)=F(\Sigma_s)-F(Z)$ we infer that for every ${\eps}>0$ there exists an $\bar{\eps}=\bar{\eps}({\eps},\Lambda)>0$ such that
\begin{equation}
\sum_{j=s_0+1}^{s_1} (F(\Sigma_j) - F(\Sigma_{j-1}))^{\frac{1}{2}} <{\eps},
\end{equation}
provided $\eps_1<\bar{\eps}$. Using the Cauchy-Schwarz inequality and the fact that the rescaled mean curvature flow is the negative gradient flow of $F$, this implies
\begin{align}
&\int_{s_0}^{s_1 } \int_{\Sigma_s}  | \vec{H}+\tfrac{1}{2}x^\perp | \frac{e^{-|x|^2/4}}{(4\pi)^{n/2}} d\mathcal{H}^n\, ds \\
&\qquad\leq \Lambda^{1/2} \sum_{j=s_0+1}^{s_1} \left( \int_{j-1}^{j} \int_{\Sigma_s}  | \vec{H}+\tfrac{1}{2}x^\perp |^2 \frac{e^{-|x|^2/4}}{(4\pi)^{n/2}} d\mathcal{H}^n\, ds \right)^{1/2} \\
&\qquad= \Lambda^{1/2} \sum_{j=s_0+1}^{s_1} (F(\Sigma_j) - F(\Sigma_{j+1}))^{\frac{1}{2}} <\Lambda^{1/2}{\eps}.
\end{align}
Choosing $\eps_1=\eps_1(\eps_0,\Lambda)$ small enough, we can make the time-integral of the weighted $L^1$-norm of $\vec{H}+\tfrac{1}{2}x^\perp$ as small as we want, and the assertion follows.
\end{proof}

\subsection{Backwards stability of necks}

Inspired by \cite[Thm 6.1]{KL_singular}, we prove the following neck-stability result for $(\alpha,\beta)$-solutions:

\begin{proposition}\label{prop_backward_stab}
There is a $\delta_{\textrm{neck}}>0$ such that for all $\delta_0,\delta_1\leq \delta_{\textrm{neck}}$ there is a $\mathcal{T}=\mathcal{T}(\delta_0,\delta_1,\alpha,\beta)<\infty$ with the following property.\\
Suppose that $\mathcal{M}$ is an $(\alpha,\beta)$-solution that has a $\delta_0$-neck with center $p$ and radius $\sqrt{2(n-1)}$ at time $-1$. Then for all $t\in (-\infty, -\mathcal{T}]$ the flow $\hat{\mathcal{M}}$ which is obtained from $\mathcal{M}$ by shifting $(p,0)$ to the origin and parabolically rescaling by $|t|^{-1/2}$ is $\delta_1$-close to a round shrinking cylinder $S^{n-1}\times\mathbb{R}$ (up to rotation) that becomes extinct at time $0$. In particular, for all $t\in (-\infty, -\mathcal{T}]$ the unrescaled flow $\mathcal{M}$ has a strong $\delta_1$-neck with center $p$ and radius $\sqrt{2(n-1)|t|}$ at time $t$.
\end{proposition}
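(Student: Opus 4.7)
The plan is to argue by contradiction and compactness of $(\alpha,\beta)$-solutions, in the spirit of \cite[Thm 6.1]{KL_singular}.

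First, suppose the claim fails: for some $\delta_0,\delta_1 \leq \delta_{\textrm{neck}}$ there is a sequence of $(\alpha,\beta)$-solutions $\mathcal{M}^i$, each admitting a $\delta_0$-neck at $(p^i,-1)$ of radius $\sqrt{2(n-1)}$, together with times $t_i \to -\infty$, such that the rescaled flows $\hat{\mathcal{M}}^i$ (translating $(p^i,0)$ to the origin and parabolically rescaling by $|t_i|^{-1/2}$) fail to be $\delta_1$-close to the round shrinking cylinder extinct at $\hat{t}=0$. After translating we may assume $p^i = 0$. Since parabolic rescaling preserves the class, each $\hat{\mathcal{M}}^i$ is itself an $(\alpha,\beta)$-solution, and the original $\delta_0$-neck of $\mathcal{M}^i$ at $(0,-1)$ sits in $\hat{\mathcal{M}}^i$ at the space-time point $(0,-|t_i|^{-1})$ with radius $\sqrt{2(n-1)|t_i|^{-1}}$, collapsing onto $(0,0)$ as $i\to\infty$.

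Next I would invoke the compactness theorem for $(\alpha,\beta)$-solutions (using $\alpha$-noncollapsing, convexity, and the a priori estimates from \cite[Sec. 3.1]{HK_surgery}) to extract a subsequential limit $\hat{\mathcal{M}}^i \to \hat{\mathcal{M}}^\infty$ converging smoothly on compact subsets of space-time away from $(0,0)$, with $\hat{\mathcal{M}}^\infty$ itself an $(\alpha,\beta)$-solution. Passing the failure hypothesis to the limit, $\hat{\mathcal{M}}^\infty$ at time $\hat{t}=-1$ is not $\delta_1$-close to the round cylinder, while the collapsing $\delta_0$-necks of scale $\sqrt{2(n-1)|t_i|^{-1}}$ at $(0,-|t_i|^{-1})$ force the tangent flow of $\hat{\mathcal{M}}^\infty$ at $(0,0)$ to equal the round shrinking cylinder $Z = S^{n-1}\times\mathbb{R}$ of radius $\sqrt{2(n-1)}$ at $\hat{t}=-1$, extinct at $\hat{t}=0$.

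The contradiction then comes from identifying $\hat{\mathcal{M}}^\infty$ globally with $Z$. Consider the associated rescaled mean curvature flow $\Sigma_s = e^{s/2}\hat{\mathcal{M}}^\infty_{-e^{-s}}$, the negative gradient flow of the Gaussian area $F$. As $s \to +\infty$ it converges to the tangent flow $Z$ at $(0,0)$; as $s \to -\infty$ it converges subsequentially to the asymptotic shrinker of $\hat{\mathcal{M}}^\infty$. Since $\hat{\mathcal{M}}^\infty$ is $\beta$-uniformly $2$-convex, $\alpha$-noncollapsed and ancient, the only candidate shrinkers are the round sphere and the round cylinder; the cylindrical tangent flow at $(0,0)$ rules out the sphere, and matching with $Z$ forces the asymptotic shrinker to be $Z$ as well. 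Hence $F(\Sigma_s) \to F(Z)$ as $s \to \pm\infty$, and by Huisken's monotonicity $F$ is non-increasing along $\Sigma_s$, so $F(\Sigma_s)\equiv F(Z)$. Therefore $\Sigma_s$ is stationary for the $F$-flow at every $s$, hence equals $Z$, so $\hat{\mathcal{M}}^\infty = Z$ globally, contradicting the failure of $\delta_1$-closeness at $\hat{t}=-1$.

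I expect the main obstacle to be the rigorous extraction of the \emph{exact} cylindrical tangent flow of $\hat{\mathcal{M}}^\infty$ at $(0,0)$ from a sequence of merely $\delta_0$-close necks whose space-time centers and scales converge to $(0,0)$. This step requires choosing $\delta_{\textrm{neck}}$ small in terms of the parameters of Theorem \ref{thm_lojsim}: once the $\delta_0$-neck quality is below this threshold the {\L}ojasiewicz-Simon estimate forces the neck's axis and scale to stabilize as $i \to \infty$, pinning down both the scale $\sqrt{2(n-1)}$ and the extinction time $0$ of the limiting cylinder. After this identification, the rigidity argument above via monotonicity of $F$ proceeds cleanly within the class of $(\alpha,\beta)$-solutions.
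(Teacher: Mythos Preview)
Your overall strategy---contradiction, compactness, and then a rigidity argument via monotonicity of the Gaussian area---is in the right spirit, but there is a genuine gap at the compactness step, and the paper's proof takes a different route precisely to avoid it.

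The problem is that the compactness theorem for $(\alpha,\beta)$-solutions requires a curvature normalization at a fixed basepoint: one must know that $H$ is bounded above and below at some point of $\hat{\mathcal{M}}^i$ at a fixed time (say $\hat{t}=-1$) in order to extract a smooth nondegenerate limit. In your setup the only curvature information available sits at the collapsing $\delta_0$-neck at time $-|t_i|^{-1}$, where $H\sim |t_i|^{1/2}\to\infty$; you have no a~priori control on the curvature of $\hat{M}^i_{-1}$ near the origin, so the sequence could degenerate (curvature $\to 0$ or $\to\infty$) and no smooth $(\alpha,\beta)$-limit need exist. Even granting a weak limit $\hat{\mathcal{M}}^\infty$, your claim that its tangent flow at $(0,0)$ is the cylinder is not justified: the $\delta_0$-necks have \emph{fixed} quality $\delta_0$ and live at scales and times tending to zero in $\hat{\mathcal{M}}^i$, so they do not pass to the limit in any way that pins down a tangent flow of $\hat{\mathcal{M}}^\infty$. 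Your proposed fix via Theorem~\ref{thm_lojsim} does not help here---that inequality needs closeness to a cylinder over a \emph{range} of scales, whereas you only have a single $\delta_0$-neck per $i$.

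The paper bypasses both issues by working directly with Huisken's density $\Phi$ centered at $(0,0)$ rather than with a putative limit flow. The $\delta_0$-neck at time $-1$ gives a lower bound $\Phi(M^i_{-1})\geq \lambda(S^n)+c$ (this is how $\delta_{\textrm{neck}}$ is actually chosen), while the classification of blowdowns of $(\alpha,\beta)$-solutions gives the upper bound $\Phi\leq \lambda(S^{n-1})$. Monotonicity then produces an \emph{intermediate} scale $\tilde{t}_i\in(t_i,-1)$ with $\tilde{t}_i/t_i\to 0$ on which $\Phi$ is almost constant over an expanding window; the limit at that scale is therefore a self-shrinker, and the density bound rules out plane and sphere, forcing a cylinder and hence $\Phi(M^i_{\tilde{t}_i})\to\lambda(S^{n-1})$. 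This pins the density to be nearly $\lambda(S^{n-1})$ on all of $(-\infty,\tilde{t}_i)$, so the limit at the \emph{original} scale $|t_i|^{-1/2}$ is also a cylinder, contradicting the failure of $\delta_1$-closeness. In short, the intermediate-scale density argument simultaneously supplies the missing normalization and the identification of the limit; your direct compactness approach lacks a substitute for this step.
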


\begin{proof}
We first fix small enough constants $\delta_{\textrm{neck}}>0$ and $c>0$ (depending only on the dimension) such that the Gaussian density \eqref{huisken_density} satisfies
\begin{equation}\label{lowerdensity}
F(M_{-1})\geq \lambda(S^n)+c,
\end{equation}
whenever $\mathcal{M}$ has a $\delta_0$-neck with center $0$ and radius $\sqrt{2(n-1)}$ at time $-1$.

If the conclusion of the theorem didn't hold, then we could find a sequence $\mathcal{M}^i$ of $(\alpha,\beta)$-solutions with a $\delta_0$-neck with center $0$ and radius $\sqrt{2(n-1)}$ at time $-1$, but such that the flows $\hat{\mathcal{M}}^i$ which are obtained from $\mathcal{M}^i$ by parabolically rescaling by $|t_i|^{-1/2}$ are not $\delta_1$-close to a round shrinking cylinder, for some sequence $t_i\to -\infty$.

Consider Huisken's monotone quantity based at $(0,0)$, namely
\begin{equation}
\Phi (M_t^i)=\int_{M_t^i}\frac{1}{(4\pi\abs{t})^{n/2}}e^{-|x|^2/4|t|}d\mathcal{H}^n\qquad (t<0).
\end{equation}
Since for each fixed $i$, the flow $\hat{\mathcal{M}}^i$ has a blowdown-limit (ancient soliton) which must be either a plane, a round shrinking sphere, or a round shrinking cylinder with only one $\mathbb{R}$-factor (see \cite[Thm. 1.14, and its refinement for uniformly 2-convex flows]{HK}), we get the upper bound
\begin{equation}
\Phi (M_t^i)\leq \lambda(S^{n-1}).
\end{equation}

After passing to a subsequence, we can find $\tilde{t_i}\in (t_i,-1)$ with ${\tilde{t_i}}/t_i\to 0$ such that 
\begin{equation}
|\Phi(M^i_t)-\Phi(M^i_{\tilde t_i})|< 1/i
\end{equation}
for $t\in[A_i \tilde t_i,A_i^{-1}\tilde t_i]$, where $A_i\to\infty$. 

Let $\tilde{\mathcal{M}}^{i}$ be the sequence of flows that is obtained from $\mathcal{M}^i$ by parabolically rescaling by $|\tilde{t}_i|^{-1/2}$. After passing to a subsequence we can pass to a limit $\tilde{\mathcal{M}}^{i}\to \tilde{\mathcal{M}}^\infty$ which must be must be either a flat plane, a round shrinking sphere, or a round shrinking cylinder with only one $\mathbb{R}$-factor (see again \cite[Thm. 1.14, and its refinement for uniformly 2-convex flows]{HK}). The plane and the sphere are excluded by \eqref{lowerdensity}. In particular, we see that $\Phi (M^i_{\tilde{t}_i})\to \lambda(S^{n-1})$. 

Thus, for $i$ large enough $\Phi(M^i_t)$ is almost constant on the interval $(-\infty,\tilde{t}_i)$. Consequently, after passing to another subsequence we can pass to a limit $\hat{\mathcal{M}}^i\to \hat{\mathcal{M}}^\infty$ which must be a round shrinking cylinder with one $\mathbb{R}$-factor (see again \cite[Thm. 1.14, and its refinement for uniformly 2-convex flows]{HK}); this gives the desired contradiction.
\end{proof}

\subsection{Conclusion of the argument}\label{sec_conclusion}

Let $\mathcal{M}=\{M_t\}_{t\in [0,T)}$ be a 2-convex flow with $\Balpha$-controlled initial condition, and area bounded by $\mathcal{A}$. 
Let $\bar{t}<T$. We want to show that $\textrm{diam}(M_{\bar{t}})\leq C$, for some constant $C=C(\Balpha,\mathcal{A})<\infty$.

The following argument depends on various quality parameters for necks. The logical order for choosing these parameters is that one first fixes a small constant $\eps_0$ (depending only on the dimension), and a large factor $Q<\infty$ (e.g. Q=10), and then successively determines suitable quality parameters $\eps_1$, $\eps$, and $\eps_{\textrm{can}}$ (where $\eps_{\textrm{can}}\ll \eps\ll \eps_1 \ll \eps_0$) by reading the argument backwards.

Let $N\subset M_{\bar{t}}$ by an $\eps$-tube with $H>\bar{H}=  Q H_{\textrm{can}}$ on $N$. By the reduction from Section \ref{sec_reduction}, namely by Proposition \ref{prop_red1} and Proposition \ref{prop_red2}, it is enough to estimate the length of $N$.

Let $\gamma:[0,L]\to\mathbb{R}^{n+1}$ be an $\eps$-approximate central curve for $N$ (see Section \ref{sec_neck_tubes}), parametrized by arclength. We want to establish an upper bound for $L$, depending only on $\Balpha$ and $\mathcal{A}$. Suppose $L>2$ (otherwise we are done), and consider the truncated curve $\gamma|_{[1,L-1]}$ (which we still denote by $\gamma$) and the corresponding truncated $\eps$-tube around it (which we still denote by $N$).

Given any $p\in\gamma$, let $x\in N$ be a point on the central sphere of the $\eps$-neck centered at $p$ of radius $r_p$ at time $\bar t$. Note that $|x-p|\leq(1+\eps)r_p$. Also note that,
since $H(x)>\bar H$, for $\eps$ small enough we have 
\begin{equation}\label{eqn_rsmall}
r_p\leq 2(n-1) (Q H_{\textrm{can}})^{-1}.
\end{equation}

Set $t_p=\bar t+\tfrac{1}{2(n-1)} r_p^2$, and let $\tau:=\tfrac{1}{16}H_{\textrm{can}}^{-2}$. We claim that for every $p\in\gamma$ and every $t\in [\bar{t}-\tau,\bar{t}]$, the flow $\mathcal M$ has a strong $\eps_1$-neck with center $p$ and radius 
\begin{equation}\label{eqn_rformula}
r(t)=\sqrt{2(n-1)(t_p-t)},
\end{equation}
at time $t$, provided $\eps$ and $\eps_{\textrm{can}}$ are small enough. Note that the axis is a priori allowed to change when going from scale to scale.

Suppose towards a contradiction that $t_0\in [\bar t-\tau,\bar t]$ is the largest time such that $\mathcal M$ does not have a strong $\eps_1$-neck with center $p$ and radius $r(t_0)$ at time $t_0$.

Since every point in an $\eps$-tube belongs to the final time slice of a very strong $\eps$-neck, we immediately see that $t_0<\bar t$. More precisely, given $\eps_1$ let $\mathcal T= \mathcal T(2\eps_1,\tfrac{1}{2}\eps_1,\alpha,\beta)$ be the constant from Proposition \ref{prop_backward_stab} (backwards stability). Since there is a very strong $\eps$-neck centered at $p$ of radius $r(\bar t)$ at time $\bar t$, then for every $t$ satisfying 
\begin{equation}\label{verystrong_strong}
t-r(t)^2\geq\bar t-2\mathcal T r(\bar t)^2,
\end{equation} 
there is a strong $\eps_1$-neck centered at $p$ of radius $r(t)$ at time $t$. At time $t=t_0$ the inequality \eqref{verystrong_strong} must be violated, in other words
\begin{equation}\label{t0_is_far}
\frac{t_p - t_0}{t_p-\bar t}> \frac{4(n-1)\mathcal T+1}{2(n-1)+1}\geq \frac32 \mathcal T,
\end{equation}
where we tacitly assume that $\mathcal T$ is large enough (depending only on the dimension). Since obviously $\frac{t_p - t_0}{t_p-t_0}=1$, by the intermediate value theorem we can find a $t_1\in (t_0,\bar t]$ such that 
\begin{equation}\label{time_scaling}
\frac{t_p- t_0}{t_p-t_1}=\frac32 \mathcal T.
\end{equation}

Since $t_1\in (t_0,\bar t]$, the flow $\mathcal M$ has an $\eps_1$-neck centered at $p$ of radius  $r(t_1)$ at time $t=t_1$. Hence, there is a point $x_{t_1}\in M_{t_1}$ on the central sphere of that neck, which satisfies in particular
\begin{equation}\label{neck_curvature}
\tfrac{n-1}{(1+\eps_1)r(t_1)}<H(x_{t_1})<\tfrac{(1+\eps_1)(n-1)}{r(t_1)},
\end{equation}
and
\begin{equation}\label{distance_center}
|x_{t_1}-p| < (1+\eps_1)r(t_1).
\end{equation}

Using \eqref{eqn_rsmall}, \eqref{eqn_rformula}, and our choice of $Q$ and $\tau$ we see that
\begin{equation}
r^2(\bar t-\tau)= r_p^2+2(n-1)\tau\leq \frac{(n-1)^2}{4}H_{\textrm{can}}^{-2},
\end{equation}
which together with \eqref{neck_curvature} and the obvious inequality $r(t_1)\leq r(\bar t-\tau)$, implies that
\begin{equation}
H(x_{t_1})\geq H_{\textrm{can}}.
\end{equation}
Thus, by Theorem \ref{thm_can_nbd} (canonical neighborhoods) and \eqref{distance_center} the flow $\hat{ \mathcal M}$ that is obtained from $\mathcal M$ by shifting $(p,t_1)$ to the origin and parabolically rescaling by $H^{-1}(x_{t_1})$ is $\eps_{\textrm{can}}$-close in $C^{\lfloor 1/ \eps_{\textrm{can}}\rfloor}$ in $B_{1/\eps_{\textrm{can}}-2n}(0)\times (-\eps_{\textrm{can}}^2,0]$ to an $(\alpha,\beta)$-solution $\mathcal{N}$.

For $\eps_{\textrm{can}}$ small enough, the $(\alpha,\beta)$-solution $\mathcal N$ has a $2\eps_1$-neck with radius $(n-1)$ and center $0$ at time $0$. By  Proposition \ref{prop_backward_stab} (backwards stability) it follows that $\mathcal N$ has a strong $\tfrac{1}{2}\eps_1$-neck centered at $0$ of radius $\sqrt{2(n-1)(s+(n-1)/2) }$ at time $-s$, provided $s\geq (n-1)(\mathcal{T}-1)/2$. 

Putting things together, it follows that if $\eps_{\textrm{can}}\ll \eps_1$ (depending only on $\Balpha$) then $\mathcal M$ has a strong $\tfrac{3}{4}\eps_1$-neck centered at $p$ of radius $r(t_0)=\sqrt{2(n-1)(t_p-t_0)}$ at time $t_0$; a contradiction. This proves the claim.

Now we can apply Proposition \ref{prop_small_tilt} (small tilt) with $\eps_1=\eps_1(\eps_0,\Lambda)$ to conclude that for every $p\in\gamma$ there exists $O_p\in \textrm{SO}_{n+1}$ such that for all $t\in [\bar{t}-\tau,\bar{t}]$ we have that $M_t$ is $\eps_0$-close to the cylinder $Z_p=p+O_p(S^{n-1}_{\sqrt{2(n-1)(t_p-t)}}\times \mathbb{R})$, with fixed axis, in $B_{\eps_0^{-1}\sqrt{2(n-1)(t_p-t)}}(p)$.

Finally, if $p_1,p_2\in \gamma$ are points with $|p_1-p_2|<\tfrac{1}{4}\eps_0^{-1}\sqrt{\tau}$, then the associated cylinders $Z_{p_1},Z_{p_2}$ have substantial overlap at time $t=\bar{t}-\tau$, hence $||O_{p_1} -O_{p_2}||=O(\varepsilon_0)$ and $\min\left\{\frac{t_{p_1}}{t_{p_2}}, \frac{t_{p_2}}{t_{p_1}}\right\}=1+O(\varepsilon_0)$. Thus
\begin{equation}\label{intrinsic_extrinsic}
d_\gamma(p_1,p_2)\leq (1+O(\eps_0))|p_1-p_2|,
\end{equation}
i.e. the intrinsic distance along $\gamma$ between any two points $p_1,p_2\in \gamma$ with $|p_1-p_2|<\tfrac{1}{4}\eps_0^{-1}\sqrt{\tau}$ is controlled by $(1+O(\eps_0))$ times their extrinsic distance. Thus, the intersection of $\gamma$ with any ball of radius $\tfrac{1}{8}\eps_0^{-1}\sqrt{\tau}$ is $O(\eps_0)$-close to a linear segment. Since $M_0$ is contained in  large ball of radius $R=R(\Balpha,\mathcal{A})<\infty$, we conclude that the length of $\gamma$ is bounded by some constant depending only on $\Balpha$ and $\mathcal{A}$.
This finishes the proof of Theorem \ref{thm_diam}.

\section{Proof of Theorem \ref{thm_regularity_est}}\label{sec_sharp_integral}
 
In this final section, we will prove Theorem \ref{thm_regularity_est}. 

We use the framework of mean curvature flow with surgery from \cite{HK_surgery}. Thus, for us a mean curvature flow with surgery is a $(\Balpha,\delta,\mathbb{H})$-flow as defined in \cite[Def. 1.17]{HK_surgery}. Recall in particular that $\Balpha=(\alpha,\beta,\gamma)$ denotes the control parameters for the two-convex initial hypersurface $M_0$, that $\delta$ specifies the quality of the surgeries, and that the three curvature scales $\mathbb{H}=(H_{\textrm{trig}},H_{\textrm{neck}},H_{\textrm{th}})$ are used to specify more precisely when and how surgeries (and/or discarding) are performed.

The main existence theorem \cite[Thm. 1.21]{HK_surgery} and the canonical neighborhood theorem \cite[Thm. 1.22]{HK_surgery}, tell us that for any $\Balpha$-controlled initial hypersurface $M_0$, there exists an $(\Balpha,\delta,\mathbb{H})$-flow starting at $M_0$ such that all points with $H\geq H_{\textrm{can}}(\Balpha, \eps_{\textrm{can}})$ possess canonical neighborhoods, provided $\delta$ is small enough, and $H_{\textrm{th}}$ as well as the ratios $H_{\textrm{trig}}/H_{\textrm{neck}}$, $H_{\textrm{neck}}/H_{\textrm{th}}$ are large enough (depending only on $\Balpha$).

We make the following two minor adjustments compared to \cite{HK_surgery}. First, we work with `very strong' necks instead of `strong' necks. This is just a cosmetic change, that only slightly alters some constants. Second, and more importantly, in the line before \cite[Claim 4.7]{HK_surgery} instead of selecting an arbitrary minimal collection of disjoint $\delta$-necks that separates the thick part and the trigger part, we select an `innermost' collection of such $\delta$-necks, i.e. we impose the additional condition that
\begin{equation}
\sum_{p\in \hat{\mathcal{J}}_j} \textrm{dist}(p,\{H=H_{\textrm{trig}}\})
\end{equation}
is minimal. If the surgeries are performed this way, then on the discarded components we have
\begin{equation}\label{eq_lowerdisc}
H \geq c H_{\textrm{neck}}
\end{equation}
for some $c=c(\Balpha)>0$.

The argument from the previous sections (taking into account the one additional case that canonical neighborhoods can now also be modelled on the evolution of a standard cap preceeded by the evolution of a round shrinking cylinder) shows that every connected component $M_{\bar{t}}'\subset M_{\bar{t}}$ satisfies
\begin{equation}\label{diam_surgery}
\textrm{diam}(M_{\bar{t}}',d_{\bar{t}})\leq C(\Balpha,\mathcal{A})<\infty
\end{equation}
for all $\bar{t}\geq 0$, where $d_{\bar t}$ denotes the intrinsic distance on $M'_{\bar t}$.

Moreover, by the canonical neighborhood theorem, the nature of the surgeries, and the $\alpha$-noncollapsing, the number of connected components is uniformly bounded by some $N=N(\Balpha,\mathcal{A},H_{\textrm{thick}})<\infty$.

We want to show that for any $\bar{t}\geq 0$ we have the estimate
\begin{equation}\label{lpmc}
\int_{M_{\bar t}'} H^{n-1} d\mu_{\bar{t}} \leq C(\Balpha,\mathcal{A})<\infty.
\end{equation}

To this end, write $M'_{\bar t}=M_{\bar t}^{\textrm{low}}\cup M_{\bar t}^{\textrm{high}}$, with $M_{\bar t}^{\textrm{low}}=\{x\in M'_{\bar{t}}: H(x)\leq \bar H\}$ and $M_{\bar t}^{\textrm{high}}=\{x\in M'_{\bar{t}}: H(x)> \bar H\}$, where $\bar{H}\gg H_{\textrm{can}}$ is a large but fixed constant as in the previous section.

Using the canonical neighborhood theorem, we can decompose $M_{\bar t}^{\textrm{high}}$ into the union of $\eps$-tubes, caps, standard caps, and compact solutions of controlled geometry. It is easy to see that the latter three only contribute a controlled amount to the integral in \eqref{lpmc}. 

Hence, let $M^{\textrm{tubes}}_{\bar t}\subset M_{\bar t}^{\textrm{high}}$ be the union of the remaining $\eps$-tubes, with curvature larger than $\bar H$. Note that at this point we allow the $\eps$-tubes to have identified ends and combine tubes with their ends attached. 

We can then separate the connected components of $M_{\bar t}^{\textrm{tubes}}$ in two types: $\eps$-tubes $\{T^{\textrm{low}}_i\}$ with curvature less than $2\bar H$ and $\eps$-tubes $\{T^{\textrm{high}}_i\}$ that intersect the set $\{H>2\bar H\}$. Since we obviously have
\begin{equation}
\int_{M_{\bar t}\cap \{H\leq 2\bar{H}\}} H^{n-1} d\mu_{\bar{t}} \leq    (2\bar H)^{n-1} \mathcal A,
\end{equation}
it remains to estimate
\begin{equation}\label{integral_high}
\int_{\bigcup_i T^{\textrm{high}}_i} H^{n-1} d\mu_{\bar{t}}.
\end{equation}

Now, we can write any $\eps$-tube $T^{\textrm{high}}$ as the union of a maximal collection of $\eps$-necks $N_i$ centered at $p_i$ of radius $r_i$ at time $t=\bar t$, such that any two balls $B_{r_i/5}(p_i)$, $B_{r_j/5}(p_j)$ are disjoint. We can then estimate
\begin{equation}\label{T_high}
\int_{T^{\textrm{high}}} H^{n-1}d\mu_t\leq \sum_i \int_{N_i} H^{n-1}d\mu_t \leq c \sum_i r_i \leq c L,
\end{equation}
where $L$ is the length of the $\eps$-tube $N$ and $c=c(n)<\infty$ is a numerical constant. Since the intrinsic diameter of each connected component is uniformly bounded, the integral \eqref{T_high} is bounded by some constant $C=C(\Balpha,\mathcal{A})<\infty$.

To bound the integral \eqref{integral_high}, we need to control the number of tubes in the union. Assume without essential loss of generality that $M_{\bar t}^{\textrm{low}}\neq \emptyset$ (the  case $M_{\bar t}^{\textrm{low}}= \emptyset$ can be handled easily). Note that each tube $T^{\textrm{high}}$ has a point $x$ with $H(x)=\tfrac{3}{2} \bar H$. This implies that $x$ belongs to an $\eps$-neck that is entirely contained in $T^{\textrm{high}}$ since its curvature is approximately $\tfrac{3}{2}\bar H$, so  each tube $T^{\textrm{high}}_i$ contributes a definite amount of area. Since the total area of $M_{\bar t}$ is bounded by $\mathcal A$, this suffices to control the number of such tubes by a constant that depends only on $\Balpha$ and $\mathcal A$, and concludes the proof of \eqref{lpmc}.

Now, in order to prove the regularity estimate for the level set flow, fix the initial hypersurface $M_0$, and consider a sequence $(\Balpha,\delta,\mathbb{H}^j)$-flows starting at $M_0$, with $H_{\textrm{neck}}^j\to \infty$, but $H^j_{\textrm{thick}}$ bounded. By the above, we have the uniform estimate
\begin{equation}
\int_{M_t^j} H^{n-1}d\mu_t \leq C(\Balpha,\mathcal{A}).
\end{equation} 
By the work of Head \cite{Head} and Lauer \cite{Lauer} (see also \cite[Prop. 1.27, Rem. 4.11]{HK}; in particular, note that equation \eqref{eq_lowerdisc} ensures that the curvature of the discarded components goes to infinity) for $j\to \infty$ the flows with surgery $\{M_t^j\}$ converge to the level set flow $\{M_t\}$ with initial condition $M_0$. The convergence is both in the space-time Hausdorff sense and also in the varifold sense for every time. By lower-semicontinuity of $L^p$-norms of the mean curvature  under varifold convergence, we thus infer that the level set flow satisfies
\begin{equation}
\int_{M_t} H^{n-1}d\mu_t \leq C(\Balpha,\mathcal{A})
\end{equation} 
for every $t\geq 0$. Finally, by the local curvature estimate \cite[Thm. 1.8]{HK} we have $r_{\mathcal{M}}^{-1}\leq C(\Balpha)H$, and the assertion of Theorem \ref{thm_regularity_est} follows.

\begin{appendix}

\section{Discrete {\L}ojasiewicz lemma}

\begin{lemma}[{Discrete {\L}ojasiewicz lemma, c.f. \cite[Lem. 6.9]{CM}}]\label{sum}
For every $\eps>0$, there exists a $\delta=\delta(\eps,K,\mu)>0$ with the following significance.
Suppose that $f:\{0,1,\ldots, T\}\rightarrow \mathbb{R}$ is a non-increasing function such that for some $K<\infty$ and $\mu<1$ it holds that
\begin{equation}
|f(t)|^{1+\mu} \leq K\left( f(t-1) - f(t+1)\right)
\end{equation}
for $t= 1,\ldots ,T-1$, and suppose that $|f|\leq \delta$. Then
\begin{equation}\label{full_sum}
\sum_{j=1}^{T}  (f(j) - f(j-1))^{\frac{1}{2}} \leq \eps.
\end{equation}
\end{lemma}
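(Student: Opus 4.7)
The plan is to discretize the continuous {\L}ojasiewicz gradient-flow heuristic. In the continuous analog, the inequality $|f|^{1+\mu}\leq -2Kf'$ forces decay $f(t)\lesssim(t+|f(0)|^{-\mu})^{-1/\mu}$, and then
\begin{equation*}
\int\sqrt{-f'}\,dt \;\lesssim\; \int|f|^{(1+\mu)/2}\,dt \;\lesssim\; \delta^{(1-\mu)/2} \;\to\; 0 \quad\text{as}\ \delta\to 0,
\end{equation*}
the last step using $\mu<1$. I will reproduce this discretely. Since $f$ is monotone there is at most one sign change, and on the negative segment the involution $\tilde f(t):=-f(T-t)$ restores non-negativity while preserving both the monotonicity and the hypothesis; so I may assume $f\geq 0$ throughout. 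Writing $c_j:=f(j-1)-f(j)\geq 0$, the hypothesis becomes $c_j+c_{j+1}\geq f(j)^{1+\mu}/K$.

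The key analytic step is a discrete decay estimate: there exists $C_0=C_0(K,\mu)$ such that
\begin{equation*}
f(t)\;\leq\;C_0\,(t+\delta^{-\mu})^{-1/\mu}\qquad(t\geq 1).
\end{equation*}
Setting $g(t):=f(t)^{-\mu}$ (non-decreasing), the mean value theorem applied to $x\mapsto x^{-\mu}$, combined with the hypothesis, yields the two-step increment
\begin{equation*}
g(t+1)-g(t-1) \;\geq\; \mu\,f(t-1)^{-\mu-1}\bigl(f(t-1)-f(t+1)\bigr) \;\geq\; \tfrac{\mu}{K}\bigl(\tfrac{f(t)}{f(t-1)}\bigr)^{1+\mu}.
\end{equation*}
I then split into two regimes. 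In the \emph{slow} regime $f(t+1)\geq \tfrac12 f(t-1)$ this gives a uniform additive kick $c_1(K,\mu)>0$; in the \emph{fast} regime $f(t+1)<\tfrac12 f(t-1)$ one has instead the multiplicative jump $g(t+1)\geq 2^\mu g(t-1)$. A bookkeeping argument (each fast step produces exponential growth of $g$ that more than compensates for the additive kick it displaces) then yields $g(t)\geq g(0)+c(K,\mu)\,t$, which is the claimed decay.

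Armed with the decay, I apply weighted Cauchy--Schwarz with weight $f(j-1)^{-2a}$, for any fixed $a\in(\mu/2,\,1/2)$ (non-empty precisely because $\mu<1$):
\begin{equation*}
\sum_{j=1}^T\sqrt{c_j}\;\leq\;\Bigl(\sum_j c_j\,f(j-1)^{-2a}\Bigr)^{1/2}\Bigl(\sum_j f(j-1)^{2a}\Bigr)^{1/2}.
\end{equation*}
The first factor is telescopic: monotonicity of $x^{-2a}$ gives $c_j\,f(j-1)^{-2a}\leq\int_{f(j)}^{f(j-1)}x^{-2a}\,dx$, which sums to $\int_0^\delta x^{-2a}\,dx\lesssim\delta^{1-2a}$ (finite because $2a<1$). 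The second factor is controlled by the decay: $\sum f(j-1)^{2a}\lesssim\int_0^\infty(s+\delta^{-\mu})^{-2a/\mu}\,ds\lesssim\delta^{2a-\mu}$ (convergent at infinity because $2a>\mu$). The product is therefore $\lesssim\delta^{(1-\mu)/2}$, which is $<\eps$ as soon as $\delta=\delta(\eps,K,\mu)$ is chosen small enough.

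The principal obstacle is the decay estimate, because the MVT-based increment $g(t+1)-g(t-1)\gtrsim(f(t)/f(t-1))^{1+\mu}$ degenerates when $f$ drops by a large factor in a single step. The slow/fast dichotomy resolves this by showing that every such fast transition produces a multiplicative jump in $g$ that compensates for the additive kick it fails to provide; once this is in place, the remainder is a routine weighted Cauchy--Schwarz estimate.
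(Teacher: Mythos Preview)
Your argument is correct and follows the same three--step skeleton as the paper: reduce to $f\geq 0$ via the time--reversal involution $\tilde f(t)=-f(T-t)$, establish a power--law decay for $f$, and then run a weighted Cauchy--Schwarz. The execution, however, differs in two points worth recording.

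First, the paper simply quotes the decay $f(t)\leq Ct^{-1/\mu}$ from \cite[Lem.~6.9]{CM} and, because this decay says nothing about small $t$, must afterwards treat an initial block of $2b$ terms by hand using $|f|\leq\delta$. You instead prove the sharpened form $f(t)\leq C_0(t+\delta^{-\mu})^{-1/\mu}$ self--containedly, via the slow/fast dichotomy on $g=f^{-\mu}$. (One small point to make explicit: your bookkeeping ``each fast step more than compensates for the additive kick'' uses that $g(t-1)\geq g(0)\geq\delta^{-\mu}$ is already large, so that $(2^\mu-1)g(t-1)\geq c_1$; this is exactly where you spend the freedom to shrink $\delta$ in terms of $K,\mu$.) Because the smallness of $\delta$ is already built into your decay, no separate boundary argument is needed.

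Second, the paper weights Cauchy--Schwarz by the index, $j^p$ with $p\in(1,1/\mu)$, whereas you weight by the value, $f(j-1)^{-2a}$ with $a\in(\mu/2,1/2)$. Your choice makes the first factor telescope exactly to $\int_0^{\delta}x^{-2a}\,dx\lesssim\delta^{1-2a}$, and after pairing with the second factor produces the explicit rate $\delta^{(1-\mu)/2}$, which the paper's version does not give directly. The trade--off is that the paper's write--up is shorter (the decay is outsourced), while yours is self--contained and quantitative.
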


\begin{proof}
For most parts of the proof we only assume $|f|\leq 1$. We will impose the condition that $|f|\leq \delta$ is actually small towards the end.

Let $t_0\in \{0,1,\ldots,T\}$ be the smallest integer with the property $f(j)< 0$ for every $t_0<j \leq T$. 

If $t_0>0$, then as in \cite[Lemma 6.9]{CM}, there is a $C=C(K,\mu)<\infty$ such that
\begin{equation}\label{decay}
f(t)\leq C t^{-\frac{1}{\mu}},
\end{equation}
for every $t\in [0,t_0]$. Moreover, for $p\in (1,\frac{1}{\mu})$ and any $j_0\in [1,t_0]$, using the Cauchy--Schwarz inequality we obtain
\begin{equation}\label{CS}
\left(\sum_{j=j_0}^{t_0} (f(j) - f(j+1))^{\frac{1}{2}}\right)^{2} \leq \left(\sum_{j=j_0}^{t_0} (f(j) - f(j+1)) j^p\right) \sum_{j=j_0}^{t_0} j^{-p}.
\end{equation}
Estimating the right-hand side of \eqref{CS} as in \cite[Lemma 6.9]{CM} we find $b_1=b_1(\delta,K,\mu)<\infty$ such that if $b_1\leq j_0\leq t_0$ then
\begin{equation}\label{small1}
\sum_{j=j_0}^{t_0} (f(j) - f(j+1))^{\frac{1}{2}}<\frac{\varepsilon}{4}.
\end{equation}
In case that $t_0<T$, we also consider the function $\tilde f(t):=-f(T-t)$. This function $\tilde f:[0, T-t_0]\rightarrow [0,\infty)$ is non-increasing and satisfies
\begin{equation}
\begin{aligned}
\tilde f(t)^{1+\mu}=|f(T-t)|^{1+\mu}&\leq K\left( f(T-t-1)- f(T-t+1) \right)\\
&=K\left( \tilde f(t-1) -\tilde f(t+1)\right),
\end{aligned}
\end{equation}
and $|\tilde f|\leq 1$. As above, we can find $b_2=b_2(\delta,K,\mu)<\infty$ such that if $b_2\leq j_0\leq T-t_0-2$ then
\begin{equation}\label{small2}
\sum_{j=t_0+1}^{T-j_0-1} (f(j) - f(j+1))^{\frac{1}{2}}=\sum_{k=j_0}^{T-t_0-2} (\tilde f(k) - \tilde f(k+1))^{\frac{1}{2}}<\frac{\varepsilon}{4}.
\end{equation}
Together with \eqref{small1} this implies that for $b=\max\{b_1,b_2+1\}$
\begin{equation}\label{sum1}
\sum_{j=b}^{T-b} (f(j)-f(j+1))^{\frac{1}{2}}<\frac{\varepsilon}{2},
\end{equation}
tacitly assuming that $T>2b$ (otherwise there is not much to prove, see the next sentence).
Since only $2b-2$ terms of \eqref{full_sum} missing from \eqref{sum1}, and each is bounded by $(f(j)-f(j+1))^{\frac{1}{2}}<(2\delta)^{\frac{1}{2}}$, we conclude that \eqref{full_sum} holds, provided we choose $\delta$ small enough.
\end{proof}

\end{appendix}

\bibliographystyle{amsplain}

\newcommand{\noopsort}[1]{} \newcommand{\singleletter}[1]{#1}
\providecommand{\bysame}{\leavevmode\hbox to3em{\hrulefill}\thinspace}
\providecommand{\MR}{\relax\ifhmode\unskip\space\fi MR }
\providecommand{\MRhref}[2]{%
  \href{http://www.ams.org/mathscinet-getitem?mr=#1}{#2}
}
\providecommand{\href}[2]{#2}

\vspace{10mm}
{\sc Department of Mathematics, University of Toronto,  40 St George Street, Toronto, ON M5S 2E4, Canada}\\

\emph{E-mail:} p.gianniotis@utoronto.ca, roberth@math.toronto.edu

\end{document}